\numberwithin{equation}{section}
\newtheorem{theorem}{Theorem}
\newtheorem{lemma}[theorem]{Lemma}
\newtheorem{corollary}[theorem]{Corollary}
\theoremstyle{definition}
\newtheorem*{remark}{Remark}
\definecolor{tileColor}{HTML}{A32400}
\definecolor{bandColor}{HTML}{A32400}
\definecolor{spotColor}{HTML}{A32400}
\definecolor{spanColor}{HTML}{0024A3}
\definecolor{arcColor}{HTML}{A32400}
\tikzstyle{dottedBand}=[draw, line width=1.5, dotted, color=bandColor]
\tikzstyle{solidBand}=[draw, line width=1.5, color=bandColor]
\tikzstyle{Tiling}=[line width=1.5]
\tikzstyle{Remover}=[line width=1.6, color=white]
\tikzstyle{dottedSep}=[draw, line width=1.5, dotted, color=tileColor]
\tikzstyle{solidSep}=[draw, line width=1.5, color=tileColor]
\tikzstyle{Tree}=[line width=1.5, color=spanColor]
\tikzstyle{spanRem}=[line width=1.65, color=white]
\tikzstyle{node}=[circle, inner sep=1.5, fill=spanColor]
\tikzstyle{arc1}=[draw, line width=1.2, color=arcColor]
\newcommand{\plainTile}[2]{
\def\n{#1}
\def\sf{#2}
\begin{tikzpicture}[scale=\sf]
\draw[lightgray] (0,0) grid (\n,1);
\draw[Tiling] (0,0) rectangle (\n,1);
\end{tikzpicture}
}
\newcommand{\dottedTile}[3]{
\def\n{#1}
\def\sf{#3}
\begin{tikzpicture}[scale=\sf]
\draw[lightgray] (0,0) grid (\n,1);
\foreach \x in {#2}{
	\draw[dottedSep] (\x,0.04) -- (\x,1);
}
\draw[Tiling] (0,0) rectangle (\n,1);
\end{tikzpicture}
}
\newcommand{\mixedTile}[4]{
\def\n{#1}
\def\sf{#4}
\begin{tikzpicture}[scale=\sf]
\draw[lightgray] (0,0) grid (\n,1);
\foreach \x in {#2}{
	\draw[Tiling] (\x,0) -- (\x,1);
}
\foreach \x in {#3}{
	\draw[dottedSep] (\x,0.025) -- (\x,1);
}
\draw[Tiling] (0,0) rectangle (\n,1);
\end{tikzpicture}
}
\newcommand{\spottedTile}[5]{
\def\n{#1}
\def\sf{#5}
\begin{tikzpicture}[scale=\sf]
\draw[lightgray] (0,0) grid (\n,1);
\foreach \x in {#2}{
	\draw[Tiling] (\x,0) -- (\x,1);
}
\foreach \x in {#3}{
	\draw[dottedSep] (\x,0.06) -- (\x,1);
}
\foreach \x in {#4}{
	\draw[color=spotColor,fill=spotColor] (\x-0.5,0.5) circle (3pt); 
}
\draw[Tiling] (0,0) rectangle (\n,1);
\end{tikzpicture}
}
\newcommand{\arcTile}[5]{
\def\n{#1}
\def\sf{#5}
\begin{tikzpicture}[scale=\sf]
\draw[lightgray] (0,0) grid (\n,1);
\foreach \x in {#2}{
	\draw[Tiling] (\x,0) -- (\x,1);
}
\foreach \x in {#3}{
	\draw[dottedSep] (\x,0.06) -- (\x,1);
}
\foreach \i in {1,...,\n} {%
	\draw[fill,arc1] (\i-0.5,0.5) circle (0.06);
}
\foreach \x/\y in {#4} {
	\pgfmathsetmacro\s{\x+\y}
	\pgfmathsetmacro\d{\y-\x}
	\pgfmathsetmacro\eps{\d^0.6}
	\draw[thick,arc1] (\x-0.5,0.5) parabola bend (0.5*\s-0.5,0.5+0.18*\eps) (\y-0.5,0.5);
}
\draw[Tiling] (0,0) rectangle (\n,1);
\end{tikzpicture}
}
\newcommand{\spanTree}[5]{ 
\def\n{#1}
\def\sf{#5}
\begin{tikzpicture}[scale=\sf]
\draw[Tree] (0.5,1) -- (\n+0.5,1);
\draw[Tree] (0.5,0) -- (\n+0.5,0);
\foreach \x in {#2}{
	\draw[spanRem] (\x-0.5,0) -- (\x+0.5,0);
}
\foreach \x in {#3}{
	\draw[spanRem] (\x-0.5,1) -- (\x+0.5,1);
}
\foreach \x in {#4}{
	\draw[Tree] (\x-0.5,0) -- (\x-0.5,1); 
}
\foreach \x in {0,...,\n}{ 
  \node[node] at (\x+0.5,0) {}; 
  \node[node] at (\x+0.5,1) {}; 
 }
\end{tikzpicture}
}
\newcommand{\drawarc}[3]{%
\def\sf{#1}
\def\n{#2}
\tikz[scale=\sf]{%
\foreach \i in {1,...,\n} {%
	\draw[fill,arc1] (\i,0) circle (0.06);
}
\foreach \x/\y in {#3} {
	\pgfmathsetmacro\s{\x+\y}
	\pgfmathsetmacro\d{\y-\x}
	\pgfmathsetmacro\eps{\d^0.7}
	\draw[thick,arc1] (\x,0) parabola bend (0.5*\s,0.25*\eps) (\y,0);
}
}}
\newcommand{\oeis}[1]{\cite[#1]{Sloane}}
\def\C{\mathscr{C}}
\def\P{\mathcal{P}}
\def\Q{\mathcal{Q}}
\def\U{\mathcal{U}}
\def\NCN{\textup{NCN}}
\def\NCNi{\textup{NCN}^{i}}
\title{Fibonacci colored compositions and applications}
\author{Juan B. Gil}
\author{Jessica A. Tomasko}
\address{Penn State Altoona\\ 3000 Ivyside Park\\ Altoona, PA 16601}
\begin{document}

\begin{abstract}
We study compositions whose parts are colored by subsequences of the Fibonacci numbers. We give explicit bijections between Fibonacci colored compositions and several combinatorial objects, including certain restricted ternary and quaternary words, spanning trees in the ladder graph, unimodal sequences covering an initial interval, and ordered-consecutive partition sequences. Our approach relies on the basic idea of representing the colored compositions as tilings of an $n$-board whose tiles are connected, and sometimes decorated, according to a suitable combinatorial interpretation of the given coloring sequence.
\end{abstract}

\maketitle

\section{Introduction}

A composition of a positive integer $n$ is an ordered tuple of positive integers, called parts, whose sum is $n$. There are $\binom{n-1}{m-1}$ compositions of $n$ with $m$ parts, and $2^{n-1}$ total compositions of $n$. More generally, given a sequence $w=(w_k)_{k\in\mathbb{N}}$ of nonnegative integers, a $w$-color composition of $n$ is a composition of $n$ such that part $k$ can take on $w_k$ colors. If $W_n$ is the number of $w$-color compositions of $n$, then the generating functions $w(t)=\sum_{n=1}^\infty w_n t^n$ and $W(t)=\sum_{n=1}^\infty W_n t^n$ are known to satisfy the relation $W(t) = \frac{w(t)}{1-w(t)}$. This means that $(W_n)_{n\in\mathbb{N}}$ is the {\sc invert} transform of $(w_n)_{n\in\mathbb{N}}$. Colored compositions appear naturally in a wide variety of combinatorial problems and are therefore of special interest (see e.g\ \cite{ABCM14, BGW18, GL13, HL68, BHop12,  MW61}). For a comprehensive study of compositions, we refer to the book by S.~Heubach and T.~Mansour \cite{HM10}.

In this paper, we will focus on the Fibonacci numbers \oeis{A000045} defined by
\begin{gather*}
 F_0=0,\;\; F_1=1,\\
 F_n = F_{n-1}+F_{n-2} \;\text{ for } n\ge 2,
\end{gather*}
and we will use various subsequences of $(F_k)_{k\in\mathbb{N}}$ as coloring sequences. Our strategy is to suitably represent the resulting colored compositions of $n$ as tilings of an $n$-board ($1 \times n$ rectangular board), where each part comes as a tile decorated by means of extra tiling patterns that we call {\em secondary tilings}. These tilings not only offer a deeper understanding of the colored objects, but they oftentimes lead to elegant bijective maps to other combinatorial families.

As a start, in Section~\ref{sec:basics}, we review some common bijections between three families of compositions counted by the Fibonacci numbers. The sets and techniques presented in these bijections will be implemented in later proofs.

In Section~\ref{sec:FiboColoring}, we consider Fibonacci colored compositions and introduce the idea of a secondary tiling, where we represent the coloring sequence by adopting the tilings introduced in Section~\ref{sec:basics}. The use of secondary tilings for representing Fibonacci colored compositions allows us to easily connect them to ternary words and other restricted compositions.

After discussing Fibonacci colored compositions, in Section~\ref{sec:evenFibo}, we consider compositions colored by the even-indexed Fibonacci numbers \oeis{A001906}. We use the spotted tilings introduced by Hopkins \cite{BHop12} to bijectively connect our colored compositions with other combinatorial structures, specifically $01$-avoiding words over the alphabet $\{0,1,2,3\}$, spanning trees in the ladder graph, and simple rectangular mazes. 

In addition, in Section~\ref{sec:oddFibo}, we discuss compositions colored by the odd-indexed Fibonacci numbers \oeis{A001519} and we connect them with the set of unimodal sequences covering an initial interval of positive integers as well as the set of ordered-consecutive partition sequences. To this end, we introduce a set of partitions (called {\em totally nested partitions}) equinumerous with the set of indecomposable partitions that are noncrossing and nonnesting. These partitions are then used to conveniently decorate the secondary tilings needed for the representation of the odd-indexed Fibonacci colored compositions.

Finally, in Section~\ref{sec:furtherRemarks}, we give a simple bijection between the Fibonacci colored compositions from Section~\ref{sec:FiboColoring} and a set of multicompositions recently studied by Hopkins and Ouvry \cite{HO20}.

\section{Compositions counted by the Fibonacci numbers}
\label{sec:basics}

Consider the following sets of integer compositions:
\begin{align*}
\C_{1,2}(n) &= \{\text{compositions of $n$ having only parts $1$ and $2$}\}, \\
\C_{\rm odd}(n) &= \{\text{compositions of $n$ having only odd parts}\}, \\
\C_{>1}(n) &= \{\text{compositions of $n$ having no part 1}\}.
\end{align*}

It is well known and easy to prove that
\begin{equation*}
   \left|\C_{1,2}(n)\right| = F_{n+1},\quad \left|\C_{\rm odd}(n)\right| = F_{n}, \quad \left|\C_{>1}(n)\right| = F_{n-1}.
\end{equation*}
As a warm-up exercise, we give two simple combinatorial bijections 
\[ \alpha: \C_{1,2}(n) \to \C_{\rm odd}(n+1) \;\text{ and }\; \beta: \C_{\rm odd}(n) \to \C_{>1}(n+1) \]
that rely on the tiling interpretation of compositions.

\medskip
\noindent\textbf{Bijection between $\C_{1,2}(n)$ and $\C_{\rm odd}(n+1)$.} \
Define $\alpha$ by the following algorithm:
\begin{enumerate}[(i)]
\item Given a composition $(j_1,\dots,j_k)\in \C_{1,2}(n)$, represent it as a tiling of an $n$-board;
\item draw a horizontal line through the middle of every domino (tile of length 2); \label{bundling}
\item draw a vertical line at the center of each domino and remove the top and bottom horizontal lines of the board;
\item we let $\alpha(j_1,\dots,j_k)$ be the composition of $n+1$ obtained by considering the vertical lines as parts after bundling those that are connected by the horizontal line segments drawn in step \eqref{bundling}. 
\end{enumerate}
The resulting composition contains only odd parts, hence $\alpha(j_1,\dots,j_k)\in \C_{\rm odd}(n+1)$.

\medskip
For example, given the composition $(1,1,2,1,2,2,1)$ in $\C_{1,2}(10)$, we get

\medskip
\begin{center}
\begin{tikzpicture}[scale=0.6]
\def\delta{0.05}
\begin{scope}[yshift=130]
\draw[lightgray] (0,0) grid (10,1);
\draw[Tiling] (0,0) rectangle (10,1);
\foreach \x in {1,2,4,5,7,9}{
\draw[Tiling] (\x,0) -- (\x,1);
}
\end{scope}

\begin{scope}[xshift=.3\textwidth,yshift=75]
\draw[thick,->,gray] (0,1.6) -- (0,1);
\end{scope}

\begin{scope}[yshift=65]
\draw[lightgray] (0,0) grid (10,1);
\draw[Tiling] (0,0) rectangle (10,1);
\foreach \x in {1,2,4,5,7,9}{
\draw[Tiling] (\x,0) -- (\x,1);
}
\foreach \x in {2,5,7}{
\draw[dottedBand] (\x+\delta,0.5) -- (\x+2-\delta,0.5);
}
\end{scope}

\begin{scope}[xshift=.3\textwidth,yshift=10]
\draw[thick,->,gray] (0,1.6) -- (0,1);
\end{scope}

\begin{scope}
\draw[Tiling,gray!80] (0,0) grid (10,1);
\draw[Tiling] (0,0) rectangle (10,1);
\foreach \x in {1,2,4,5,7,9}{
\draw[Tiling] (\x,0) -- (\x,1);
}
\foreach \x in {2,5,7}{
\draw[solidBand] (\x+\delta,0.5) -- (\x+2-\delta,0.5);
}
\draw[Remover] (-0.1,0) -- (10.1,0);
\draw[Remover] (-0.1,1) -- (10.1,1);
\draw [decorate,decoration={brace,amplitude=5pt,mirror,raise=2pt},thick,gray]
 (2,0) -- (4,0) node [midway,yshift=-14pt,black] {\small $3$};
\draw [decorate,decoration={brace,amplitude=5pt,mirror,raise=2pt},thick,gray]
 (5,0) -- (9,0) node [midway,yshift=-14pt,black] {\small $5$};
\end{scope}
\end{tikzpicture}
\end{center}
and we arrive at the composition $(1,1,3,5,1)$ in $\C_{\rm odd}(11)$. This algorithm is reversible.

\medskip
\noindent\textbf{Bijection between $\C_{\rm odd}(n)$ and $\C_{>1}(n+1)$.} \
Define $\beta$ by the following algorithm:
\begin{enumerate}[(i)]
\item Given a composition $(j_1,\dots,j_k)\in \C_{\rm odd}(n)$, represent it as a tiling of an $n$-board, 
and draw a horizontal line through the middle of each square tile; \label{onebundling}
\item within each of the tiles of length greater than 1, draw gray vertical lines to subdivide them into square tiles and draw horizontal lines through the middle of the subdivided squares in odd positions (from left to right); \label{oddbundling}
\item remove the top and bottom horizontal lines of the board;
\item we let $\beta(j_1,\dots,j_k)$ be the composition of $n+1$ obtained by considering the vertical lines as parts after bundling those that are connected by the horizontal line segments drawn in steps \eqref{onebundling} and \eqref{oddbundling}. 
\end{enumerate}

\medskip
For example, given the composition $(1,1,3,5,1)$ in $\C_{\rm odd}(11)$, we get

\medskip
\begin{center}
\begin{tikzpicture}[scale=0.6]
\def\delta{0.05}
\begin{scope}[yshift=130]
\draw[lightgray] (0,0) grid (11,1);
\draw[Tiling] (0,0) rectangle (11,1);
\foreach \x in {1,2,5,10}{
\draw[Tiling] (\x,0) -- (\x,1);
}
\foreach \x in {0,1,10}{
\draw[dottedBand] (\x+\delta,0.5) -- (\x+1-\delta,0.5);
}
\end{scope}

\begin{scope}[xshift=.34\textwidth,yshift=75]
\draw[thick,->,gray] (0,1.6) -- (0,1);
\end{scope}

\begin{scope}[yshift=65]
\draw[Tiling, gray] (0,0) grid (11,1);
\draw[Tiling] (0,0) rectangle (11,1);
\foreach \x in {1,2,5,10}{
\draw[Tiling] (\x,0) -- (\x,1);
}
\foreach \x in {0,1,2,4,5,7,9,10}{
\draw[dottedBand] (\x+\delta,0.5) -- (\x+1-\delta,0.5);
}
\end{scope}

\begin{scope}[xshift=.34\textwidth,yshift=10]
\draw[thick,->,gray] (0,1.6) -- (0,1);
\end{scope}

\begin{scope}
\draw[Tiling, gray] (0,0) grid (11,1);
\draw[Tiling] (0,0) rectangle (11,1);
\foreach \x in {1,2,5,10}{
\draw[Tiling] (\x,0) -- (\x,1);
}
\draw[solidBand] (0+\delta,0.5) -- (3-\delta,0.5);
\draw[solidBand] (4+\delta,0.5) -- (6-\delta,0.5);
\draw[solidBand] (7+\delta,0.5) -- (8-\delta,0.5);
\draw[solidBand] (9+\delta,0.5) -- (11-\delta,0.5);

\draw[Remover] (-0.1,0) -- (11.1,0);
\draw[Remover] (-0.1,1) -- (11.1,1);
\draw [decorate,decoration={brace,amplitude=5pt,mirror,raise=2pt},thick,gray]
 (0,0) -- (3,0) node [midway,yshift=-14pt,black] {\small $4$};
\draw [decorate,decoration={brace,amplitude=5pt,mirror,raise=2pt},thick,gray]
 (4,0) -- (6,0) node [midway,yshift=-14pt,black] {\small $3$};
\draw [decorate,decoration={brace,amplitude=5pt,mirror,raise=2pt},thick,gray]
 (7,0) -- (8,0) node [midway,yshift=-14pt,black] {\small $2$};
\draw [decorate,decoration={brace,amplitude=5pt,mirror,raise=2pt},thick,gray]
 (9,0) -- (11,0) node [midway,yshift=-14pt,black] {\small $3$};
\end{scope}
\end{tikzpicture}
\end{center}
and we arrive at the composition $(4,3,2,3)$ in $\C_{>1}(12)$. This algorithm is reversible.

\section{Fibonacci colored compositions}
\label{sec:FiboColoring}

In this section, we consider the sets of Fibonacci colored compositions, $\C_{F_{k+1}}(n)$, $\C_{F_{k}}(n)$, and $\C_{F_{k-1}}(n)$. For their enumeration, we give explicit bijections between these sets and certain sets of ternary words\footnote{Words over the alphabet $\{0,1,2\}$.} and other restricted compositions. This is done by representing compositions of $n$ as tilings of an $n$-board, where each part $k$ of the composition corresponds to a block/tile of length $k$. We then use the interpretations of the Fibonacci numbers discussed in the previous section to create secondary tilings that represent the different colors of a block.

\begin{theorem}
\label{thm:noAdjacent0s}
The set $\C_{F_{k+1}}(n)$ of compositions of $n$, where part $k$ comes in $F_{k+1}$ colors, is in one-to-one correspondence with the set of ternary words of length $n-1$ having no adjacent $0$s. These sets are enumerated by the sequence 1, 3, 8, 22, 60, 164, 448,\dots, see \oeis{A155020} as well as \oeis{A028859}.
\end{theorem}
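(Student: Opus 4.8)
The plan is to build the bijection through the secondary-tiling representation advertised in the introduction. Since $|\C_{1,2}(k)| = F_{k+1}$, I would encode the $F_{k+1}$ colors available to a part of length $k$ as the $F_{k+1}$ tilings of a $k$-board by unit squares and dominoes. Thus an element of $\C_{F_{k+1}}(n)$ becomes a tiling of the $n$-board carrying two levels of structure: a \emph{primary} tiling that cuts the board into the parts of the composition, and, inside each part of length $k$, a \emph{secondary} tiling of that part by squares and dominoes recording its color.

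Next I would flatten these two levels into a single object. Reading all the secondary tilings together yields one tiling of the entire $n$-board by squares and dominoes; call the squares and dominoes the \emph{atoms}. A domino never straddles a part boundary (it sits inside a single part), so every primary cut falls at a boundary between two consecutive atoms, and conversely each such atom-boundary may or may not be promoted to a primary cut, with no further restriction. Hence a colored composition in $\C_{F_{k+1}}(n)$ is the same datum as a square-domino tiling of the $n$-board together with an arbitrary subset of its atom-boundaries selected as part boundaries. (A quick check that the resulting generating function is $\frac{t+t^2}{1-2t-2t^2}$, the {\sc invert} transform of $\sum_{k\ge 1}F_{k+1}t^k$, confirms the count $1,3,8,22,60,\dots$.)

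Finally I would read off a ternary word of length $n-1$ by labeling the $n-1$ internal cell-boundaries $1,\dots,n-1$. At position $i$ exactly one of three mutually exclusive local situations occurs: cells $i$ and $i+1$ are the two halves of a single domino; or they lie in distinct atoms with position $i$ \emph{not} a part boundary; or they lie in distinct atoms with position $i$ a part boundary. I assign the letters $0$, $1$, $2$ to these three cases. Two consecutive $0$'s would force two overlapping dominoes through the shared cell, which is impossible, so the word has no adjacent $0$'s. The inverse is forced: the positions labeled $0$ mark exactly the dominoes, and the no-adjacent-$0$ condition is precisely what guarantees that these dominoes are disjoint and hence extend (by filling the remaining cells with squares) to a genuine atom tiling; the positions labeled $1$ or $2$ are the atom-boundaries, with $2$ recording a part boundary. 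This recovers the colored composition uniquely, so the map is a bijection.

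The main obstacle is the flattening step: one has to argue cleanly that the pair (primary tiling, secondary tilings) is equivalent to (atom tiling, chosen subset of atom-boundaries), i.e.\ that part boundaries are a free binary choice at atom-boundaries and can never occur inside a domino. Once this is pinned down, the trichotomy at each internal boundary and the no-adjacent-$0$ constraint fall out immediately, and checking that the inverse is well defined reduces exactly to the disjointness of dominoes guaranteed by that constraint.
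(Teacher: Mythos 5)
Your proposal is correct and follows essentially the same route as the paper's proof: both encode the $F_{k+1}$ colors of a part of length $k$ as square--domino (secondary) tilings and then read off a ternary word at the $n-1$ internal positions, writing $2$ at a part boundary, $1$ at a secondary-tiling boundary, and $0$ inside a domino, with the no-adjacent-$0$ condition coming precisely from the fact that no atom has length greater than $2$. Your explicit ``flattening'' lemma and the generating-function sanity check simply make rigorous what the paper conveys by example; the bijection and its inverse are identical.
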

\begin{proof}
To describe the bijection between our sets, we begin by drawing the compositions of $n$ as tilings of an $n$-board, where part $k$ is colored by $F_{k+1}$. Since $F_{k+1}=|\C_{1,2}(k)|$, every color choice for part $k$ may be represented by a secondary tiling of a $k$-board using tiles of length 1 or 2. The following table shows the possible secondary tilings for parts 1 through 4:
\medskip
\begin{center}
\def\R{\rule[-1ex]{0ex}{4.5ex}}
\def\hsp{\hspace{-3pt}}
\def\lsf{0.45} 
\begin{tabular}{c|c|l}
\rule[-1.1ex]{0ex}{3.5ex} $k$ & $F_{k+1}$ & \,Secondary tilings \hspace{200pt} \\ \hline\hline
\R 1 & 1 & \parbox{3em}{\plainTile{1}{\lsf}} \\
\R 2 & 2 & \parbox{10em}{\dottedTile{2}{1}{\lsf}\hsp \plainTile{2}{\lsf}} \\
\R 3 & 3 & \parbox{20em}{\dottedTile{3}{1,2}{\lsf}\hsp \dottedTile{3}{1}{\lsf}\hsp \dottedTile{3}{2}{\lsf}} \\
\R 4 & 5 & \parbox{28em}{\dottedTile{4}{1,2,3}{\lsf}\hsp \dottedTile{4}{1,2}{\lsf}\hsp
\dottedTile{4}{1,3}{\lsf}\hsp \dottedTile{4}{2,3}{\lsf}\hsp \dottedTile{4}{2}{\lsf}}
\end{tabular}
\end{center}

\bigskip
With this in mind, our bijection is best explained with an example. Suppose $(2,4,1,3)$ is a composition of 10, where part 2 is colored by \parbox{6ex}{\plainTile{2}{0.45}}, part 4 is colored by 
\parbox{11.5ex}{\dottedTile{4}{1,2}{0.45}}, and part 3 is colored by \parbox{8.8ex}{\dottedTile{3}{2}{0.45}}. This composition may then be visualized as the tiled board
\begin{center} 
\parbox{37ex}{\mixedTile{10}{2,6,7}{3,4,9}{0.6}},
\end{center}
and we can construct a ternary word as follows. Walking the board from left to right in unit steps, write a `2' if at a solid separator, write a `1' if at a dotted separator from the secondary tiling, and write a `0' if at a step with no separator:

\medskip
\begin{center}
\begin{tikzpicture}[scale=0.6]
\draw[lightgray] (0,0) grid (10,1);
\foreach \x in {1,...,9}{
	\draw[->, below=8pt] (\x,0) -- (\x,-0.5);
}
\foreach \x in {2,6,7}{
	\draw[Tiling] (\x,0) -- (\x,1);
	\node at (\x,-1.2) {\small $2$};
}
\foreach \x in {3,4,9}{
	\draw[dottedSep] (\x,0.025) -- (\x,1);
	\node at (\x,-1.2) {\small $1$};
}
\foreach \x in {1,5,8}{
	\node at (\x,-1.2) {\small $0$};
}
\draw[Tiling] (0,0) rectangle (10,1);
\end{tikzpicture}
\end{center}
The resulting word $021102201$ has no adjacent zeros because there are no tiles of length greater than 2 in the secondary tiling induced by the Fibonacci coloring. This algorithm defines a map that works for compositions of any size. The positions of the 2s are uniquely determined by the composition, and the positions of the 1s are determined by the coloring.

Conversely, given a ternary word of length $n-1$ with no adjacent zeros, we construct a tiled $n$-board as follows. We start by drawing our blank $n$-board and aligning the ternary word with the $n-1$ unit separators of the board. Then from left to right, we read the word drawing solid and dotted separators. At the letter $i$, we place a solid bar if $i$ is a 2, a dotted bar if $i$ is a 1, or nothing if $i$ is 0. The resulting board represents a composition of $n$ whose parts are determined by the solid bars. Moreover, each part $k$, defined between the 2s of the word, comes with a secondary tiling determined by the 1s and 0s. Since these words have no adjacent zeros, the subtiles must be of length 1 or 2. Observe that there are $F_{k+1}$ such secondary tilings. 
\end{proof}

\begin{theorem}\label{thm:Pell}
The set $\C_{F_{k}}(n)$ of compositions of $n$, where part $k$ comes in $F_{k}$ colors, is in one-to-one correspondence with the set of ternary words of length $n-1$ that avoid runs of $0$s of odd length. These sets are enumerated by the Pell numbers, 1, 2, 5, 12, 29, 70,\dots, see \oeis{A000129}.
\end{theorem}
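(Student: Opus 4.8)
The plan is to mirror the proof of Theorem~\ref{thm:noAdjacent0s}, replacing the $\{1,2\}$-tilings by tilings into odd-length tiles. Since Section~\ref{sec:basics} gives $F_k=|\C_{\rm odd}(k)|$, every color choice for a part $k$ can be encoded by a secondary tiling of a $k$-board into tiles of odd length $1,3,5,\dots$. As before, I would represent a composition of $n$ as a tiling of an $n$-board, decorate each part $k$ by one of its $F_k$ odd-tile secondary tilings, and then walk the board from left to right across its $n-1$ internal unit steps, recording a $2$ at each solid separator (a part boundary), a $1$ at each dotted separator (internal to a secondary tiling), and a $0$ at each step with no separator. (A small table of the odd-tile secondary tilings for parts $1$ through $4$, together with a worked example, would make the map transparent.)

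The key point is a parity count. Each maximal run of $0$s in the resulting word lies between two consecutive separators (or between a separator and an end of the word), and such a gap is exactly the interior of a single secondary tile. A secondary tile of length $\ell$ contributes $\ell-1$ interior steps, hence a run of $\ell-1$ zeros; because $\ell$ is odd, every such run has even length. Thus the word avoids runs of $0$s of odd length. As in Theorem~\ref{thm:noAdjacent0s}, the positions of the $2$s are determined by the composition and the positions of the $1$s by the chosen colorings, so the map is well defined.

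For the inverse, given a ternary word of length $n-1$ with no odd run of $0$s, I would align it with the $n-1$ unit separators of a blank $n$-board, draw a solid bar at each $2$ and a dotted bar at each $1$, and leave each $0$ blank. The solid bars cut the board into parts, and within each part the dotted bars and blanks produce a secondary tiling whose tiles span blocks of consecutive $0$s flanked by separators (or board ends). Each such tile then has length (run of $0$s) $+\,1$, which is odd by hypothesis, so every part $k$ is decorated by one of the $F_k=|\C_{\rm odd}(k)|$ admissible secondary tilings; the two constructions are manifestly mutually inverse.

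Finally, for the enumeration I would invoke the {\sc invert} transform from the introduction with $w(t)=\sum_{k\ge1}F_k t^k=\tfrac{t}{1-t-t^2}$, which yields $W(t)=\tfrac{w(t)}{1-w(t)}=\tfrac{t}{1-2t-t^2}$, the generating function of the Pell numbers \oeis{A000129}. The only delicate step in the whole argument is the boundary bookkeeping in the parity count: one must check that the runs of $0$s at the very start and end of the word correspond to genuine odd-length secondary tiles, since the leftmost and rightmost secondary tiles abut the board's outer edges rather than a drawn separator, so that ``even run of $0$s'' matches ``odd tile'' with no off-by-one slip.
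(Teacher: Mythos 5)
Your proposal is correct and follows essentially the same route as the paper: representing each part $k$ by one of its $F_k=|\C_{\rm odd}(k)|$ odd-tile secondary tilings, reading off $2$s, $1$s, and $0$s at solid separators, dotted separators, and blank steps, and observing that a tile of odd length $\ell$ yields an even run of $\ell-1$ zeros (with the same inverse construction). Your explicit invert-transform computation $W(t)=t/(1-2t-t^2)$ is a nice addition; the paper simply cites the known Pell enumeration.
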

\begin{proof}
This proof is similar to the proof of Theorem~\ref{thm:noAdjacent0s}. Once again, we draw our composition as a board of length $n$, but this time part $k$ is colored by $F_{k}=|\C_{\rm odd}(k)|$. Thus, the secondary tilings of part $k$ only use tiles of odd length. For $k\le 5$, these tilings are: 
\medskip
\begin{center}
\def\R{\rule[-1ex]{0ex}{4.5ex}}
\def\hsp{\hspace{-3pt}}
\def\lsf{0.45} 
\begin{tabular}{c|c|l}
\rule[-1.1ex]{0ex}{3.5ex} $k$ & $F_{k}$ & \,Secondary tilings \hspace{265pt} \\ \hline\hline
\R 1 & 1 & \parbox{3ex}{\plainTile{1}{\lsf}} \\
\R 2 & 1 & \parbox{6ex}{\dottedTile{2}{1}{\lsf}} \\
\R 3 & 2 & \parbox{20ex}{\dottedTile{3}{1,2}{\lsf}\hsp \plainTile{3}{\lsf}} \\
\R 4 & 3 & \parbox{40ex}{\dottedTile{4}{1,2,3}{\lsf}\hsp \dottedTile{4}{1}{\lsf}\hsp \dottedTile{4}{3}{\lsf}} \\
\R 5 & 5 & \parbox{77ex}{\dottedTile{5}{1,2,3,4}{\lsf}\hsp \dottedTile{5}{1,2}{\lsf}\hsp
\dottedTile{5}{1,4}{\lsf}\hsp \dottedTile{5}{3,4}{\lsf}\hsp \plainTile{5}{\lsf}}
\end{tabular}
\end{center}

\medskip
As before, we proceed with an example. For instance, given the colored composition $(4,2,5,1)$, where part 4
is colored by \parbox{11.3ex}{\dottedTile{4}{1}{0.45}} and part 5 is colored by \parbox{14.2ex}{\dottedTile{5}{3,4}{0.45}}, we can create a ternary word using the algorithm introduced in the proof of Theorem~\ref{thm:noAdjacent0s}:

\medskip
\begin{center}
\begin{tikzpicture}[scale=0.6]
\draw[lightgray] (0,0) grid (12,1);
\foreach \x in {1,...,11}{
	\draw[->, below=8pt] (\x,0) -- (\x,-0.5);
}
\foreach \x in {4,6,11}{
	\draw[Tiling] (\x,0) -- (\x,1);
	\node at (\x,-1.2) {\small $2$};
}
\foreach \x in {1,5,9,10}{
	\draw[dottedSep] (\x,0.025) -- (\x,1);
	\node at (\x,-1.2) {\small $1$};
}
\foreach \x in {2,3,7,8}{
	\node at (\x,-1.2) {\small $0$};
}
\draw[Tiling] (0,0) rectangle (12,1);
\end{tikzpicture}
\end{center}
The resulting ternary word $10021200112$ has zeros only in even runs since the parts of the secondary tilings have only odd length. This algorithm works for compositions of any size, and the positions of the 0s and 1s in the ternary word are uniquely determined by the coloring of the composition. The inverse map is the one described in Theorem~\ref{thm:noAdjacent0s}. Observe that if the ternary words have zeros only in even runs, then the subtiles created in the parts of the composition will all have odd length as desired.
\end{proof}

\begin{theorem} \label{thm:Jacobsthal}
The set $\C_{F_{k-1}}(n)$ of compositions of $n$, where part $k$ comes in $F_{k-1}$ colors, is in one-to-one correspondence with each of the following sets:
\begin{enumerate}[\ $(a)$]
\item compositions of $n-1$ ending with an odd part,
\item compositions of $n$ ending with an even part.
\end{enumerate}
Moreover, for $n>2$, there is a bijection between $\C_{F_{k-1}}(n)$ and the set of:
\begin{enumerate}[\ $(a)$]
\setcounter{enumi}{2}
\item ternary words of length $n-2$ in which 1 and 2 avoid runs of odd lengths,
\item ternary words of length $n-3$ with no adjacent nonzero letters.
\end{enumerate}
These sets are enumerated by the Jacobsthal numbers, see \oeis{A001045}.
\end{theorem}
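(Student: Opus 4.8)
The plan is to reuse the secondary-tiling machinery of Theorems~\ref{thm:noAdjacent0s} and~\ref{thm:Pell}, exploiting that the coloring sequence admits two convenient interpretations: since $F_{k-1}=|\C_{>1}(k)|=|\C_{\rm odd}(k-1)|$, a color of part $k$ can be recorded either as a secondary tiling of a $k$-board into tiles of length $\ge 2$, or as a tiling of a $(k-1)$-board into tiles of odd length. I would use the first interpretation to reach the two word models $(c)$ and $(d)$, and the second to reach the two composition models $(a)$ and $(b)$. Throughout, the reading convention is the one from the proof of Theorem~\ref{thm:noAdjacent0s}: walk the board and record $2$ at a solid separator, $1$ at a dotted separator, and $0$ where there is no separator.

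For $(d)$: with the $\C_{>1}$ interpretation the colored composition becomes an $n$-board cut into subtiles all of length $\ge 2$, each internal separator being solid (an outer part boundary) or dotted (a color boundary), and any such labeled tiling arises from a unique colored composition. Because every subtile has length $\ge 2$, the induced ternary word of length $n-1$ has no two adjacent nonzero letters and is forced to begin and end with $0$. Deleting these two forced endpoints gives a bijection onto ternary words of length $n-3$ with no adjacent nonzero letters (the hypothesis $n>2$ guaranteeing the stripped word has nonnegative length). For $(c)$: from the same labeled tiling, with subtiles $\ell_1,\dots,\ell_r$ ($\ell_i\ge 2$) and separators $b_1,\dots,b_{r-1}$, I would spell out a word by emitting $\ell_i-2$ copies of $0$ for each subtile and a block $11$ or $22$ for each separator according to whether $b_i$ is dotted or solid. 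Since the total length is $\sum(\ell_i-2)+2(r-1)=n-2$ and every nonzero block has length $2$, the result is a word in which $1$ and $2$ occur only in even runs; conversely a word of that type decomposes uniquely into single $0$'s and blocks $11$ or $22$ (splitting each even run into dimers), which I read back as subtile gaps and colored separators to invert the map.

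For $(a)$: now use the $\C_{\rm odd}$ interpretation, so the colored composition is an outer composition $(a_1,\dots,a_m)$ with each $a_i\ge 2$ carrying an odd composition of $a_i-1$. I would flatten this to a single composition of $n-1$ by concatenating the $m$ odd compositions and recording each non-final outer boundary by adding $1$ to the last entry of the corresponding group, turning that (odd) entry into an even one. The even parts then mark exactly the non-final group boundaries, so the map is reversible by cutting after every even part; since the last group is left untouched it ends in an odd part, placing the image in the set of compositions of $n-1$ ending with an odd part. Finally $(b)$ follows from $(a)$ by adding $1$ to the global last part, a bijection between compositions of $n-1$ ending in an odd part and compositions of $n$ ending in an even part.

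I expect the genuinely creative steps to be the two encodings that change the length: recognizing the even-run words of $(c)$ as $2$-colored monomer--dimer tilings and matching them to the subtiles-and-separators of the $\C_{>1}$ model through the $-2$ offset, and arranging the flattening in $(a)$ so that the outer boundaries are recoverable. In $(a)$ the parity trick (each boundary bumps an odd part to an even one) is what makes the map injective, and the step needing care is verifying that no genuine odd part is ever mistaken for a boundary marker and that every target composition is hit. The Jacobsthal enumeration is then immediate: the {\sc invert} transform gives $W(t)=t^2/(1-t-2t^2)$ for $\C_{F_{k-1}}(n)$, so $|\C_{F_{k-1}}(n)|=J_{n-1}$, matching the standard Jacobsthal count of each of the four target families.
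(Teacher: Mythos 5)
Your proposal is correct, and for parts $(b)$, $(c)$, $(d)$ it is essentially the paper's own argument: $(d)$ is exactly the paper's device of reading the $\C_{>1}$-tiling through the algorithm of Theorem~\ref{thm:noAdjacent0s} and stripping the two forced boundary zeros, and your dimer encoding in $(c)$ produces letter-for-letter the same word as the paper's rule of writing each separator's label on the two cells flanking it (the paper's caveat that $2|2|2|2$ is illegal and must be paired as $2|2\;2|2$ is precisely your unique splitting of even runs into dimers). The genuine difference is in $(a)$. The paper stays inside the $\C_{>1}$ picture: it subdivides every subtile into unit cells and then bundles the resulting $n-1$ separators by type (gray separators are passive, a dotted separator grabs its left neighbor, a solid one grabs both neighbors); the bundle sizes form the composition of $n-1$, and one must argue that the last bundle necessarily has odd size. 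You instead switch to the dual interpretation $F_{k-1}=|\C_{\rm odd}(k-1)|$ and flatten the color compositions, bumping the last entry of each non-final group; note this identity needs $k\ge 2$, but since $F_0=0$ forbids parts of size $1$ your standing assumption $a_i\ge 2$ is legitimate. Your route makes both the key parity property and invertibility transparent (cut after each even part, and no genuine odd part can be mistaken for a boundary marker since only bumped entries are even), whereas the paper's bundling requires a more delicate analysis of which bundles end in gray versus dotted separators; on the other hand, the paper's choice keeps all four bijections anchored in a single tiling model and recycles the bundling technique of Section~\ref{sec:basics}. Finally, your generating-function computation $W(t)=t^2/(1-t-2t^2)$, giving $|\C_{F_{k-1}}(n)|=J_{n-1}$, is correct and supplies the Jacobsthal enumeration that the paper only cites from the OEIS.
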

\begin{proof}
We start by drawing every composition in $\C_{F_{k-1}}(n)$ as an $n$-board with secondary tilings described by $F_{k-1}=|\C_{>1}(k)|$. Thus, each part of the composition can be tiled using subtiles of length greater than 1. In particular, there are no parts of length 1, and for parts 2 through 5, we have:
\medskip
\begin{center}
\def\R{\rule[-1ex]{0ex}{4.5ex}}
\def\hsp{\hspace{-3pt}}
\def\lsf{0.45} 
\begin{tabular}{c|c|l}
\rule[-1.1ex]{0ex}{3.5ex} $k$ & $F_{k-1}$ & \,Secondary tilings \hspace{120pt} \\ \hline\hline
\R 2 & 1 & \parbox{6ex}{\plainTile{2}{\lsf}} \\
\R 3 & 1 & \parbox{10ex}{\plainTile{3}{\lsf}} \\
\R 4 & 2 & \parbox{30ex}{\dottedTile{4}{2}{\lsf}\hsp \plainTile{4}{\lsf}} \\
\R 5 & 3 & \parbox{46ex}{\dottedTile{5}{2}{\lsf}\hsp \dottedTile{5}{3}{\lsf}\hsp \plainTile{5}{\lsf}}
\end{tabular}
\end{center}

\medskip
\begin{enumerate}[$(a)$]
\item 
In order to make a composition of $n-1$ that ends with an odd part, we use a similar bundling technique as in the previous section. First, within each of the secondary tiles of the given Fibonacci colored composition, we draw additional gray separators to subdivide the tiles into square tiles. Then, we bundle the $n-1$ vertical separators according to their type: 
\begin{itemize}
\item[$\circ$] gray -- does not induce bundling, 
\item[$\circ$] dotted -- induces bundling with the separator on its left, 
\item[$\circ$] solid -- induces bundling with the separators on both sides of it.
\end{itemize}
Finally, we remove the frame of the board, and the bundled separators are then gathered to create the parts of the new composition.

For example, given the colored composition $(6,2,5,4)$ of 17, where part 6 is colored by \parbox{17ex}{\dottedTile{6}{2,4}{0.45}}, part 5 is colored by \parbox{14.2ex}{\dottedTile{5}{3}{0.45}}, and part 4 is colored by \parbox{11.5ex}{\dottedTile{4}{2}{0.45}}, the above algorithm gives:

\medskip
\begin{center}
\mixedTile{17}{6,8,13}{2,4,11,15}{0.6}
\begin{tikzpicture}[scale=0.6]
\begin{scope}[xshift=.53\textwidth,yshift=75]
\draw[white,thin] (0,1.85)--(0,1.85);
\draw[thick,->,gray] (0,1.6) -- (0,1);
\end{scope}

\def\delta{0.05}
\begin{scope}[yshift=65]
\draw[lightgray] (0,0) grid (17,1);
\foreach \x in {6,8,13}{
\draw[Tiling] (\x,0) -- (\x,1);
}
\foreach \x in {1,3,5,7,9,10,12,14,16}{
    \draw[very thick, gray!80] (\x,0) -- (\x,1);
}
\foreach \x in {1,3,5,6,7,8,10,12,13,14}{
\draw[solidBand] (\x+\delta,0.5) -- (\x+1-\delta,0.5);
}
\foreach \x in {2,4,11,15}{
	\draw[dottedSep] (\x,0.025) -- (\x,1);
}
\draw[Tiling] (0,0) rectangle (17,1);
\end{scope}

\begin{scope}[xshift=.53\textwidth,yshift=10]
\draw[thick,->,gray] (0,1.6) -- (0,1);
\end{scope}

\begin{scope}
\draw[lightgray] (0,0) grid (17,1);
\foreach \x in {6,8,13}{
\draw[Tiling] (\x,0) -- (\x,1);
}
\foreach \x in {1,3,5,6,7,8,10,12,13,14}{
\draw[solidBand] (\x+\delta,0.5) -- (\x+1-\delta,0.5);
}
\foreach \x in {1,3,5,7,9,10,12,14,16}{
    \draw[Tiling,gray!80] (\x,0) -- (\x,1);
}
\foreach \x in {2,4,11,15}{
	\draw[solidSep] (\x,\delta) -- (\x,1);
}

\draw[Remover] (0,0) -- (17,0) -- (17,1) -- (0,1) -- (0,0);

\foreach \x in {1,3,10}{
\draw [decorate,decoration={brace,amplitude=5pt,mirror,raise=2pt},thick,gray]
 (\x,0) -- (\x+1,0) node [midway,yshift=-14pt,black] {\small $2$};
}
\draw [decorate,decoration={brace,amplitude=5pt,mirror,raise=2pt},thick,gray]
 (5,0) -- (9,0) node [midway,yshift=-14pt,black] {\small $5$};
\draw [decorate,decoration={brace,amplitude=5pt,mirror,raise=2pt},thick,gray]
 (12,0) -- (15,0) node [midway,yshift=-14pt,black] {\small $4$};
\draw[->, thick, gray] (16,-0.05) -- (16,-0.4);
\node at (16,0) [yshift=-14pt,black] {\small $1$};
\end{scope}
\end{tikzpicture}
\end{center}

The resulting composition $(2,2,5,2,4,1)$ of 16 has its parts depicted by the bundles created. Since the starting colored composition has no parts or subtiles of length 1, its last separator will always be a gray separator. Therefore, the constructed composition will never end in an even part.

Conversely, from a composition of length $n-1$ ending with an odd part, we can create a composition of $n$ colored by $F_{k-1}$ as follows. First, draw $n-1$ vertical lines (gray separators) and represent the parts of the composition by creating bundles of the appropriate size. Then, within each odd part, convert any even positioned vertical line into a solid separator. Do the same within each of the even parts except for the last separator, which should be converted into a dotted separator. Finally, add solid lines at the beginning and end of the representation (independently from the parts), remove the bundling, and add horizontal lines at the top and bottom to create a board of length $n$. The resulting tiled board represents an element of $\C_{F_{k-1}}(n)$. 

For example, for the composition $(4,2,3,1,1,5)$ of $16$, we get:

\medskip
\begin{center}
\begin{tikzpicture}[scale=0.6]
\def\delta{0.05}
\begin{scope}[yshift=130]
\draw[very thick, gray] (0,0) grid (16,1);
\foreach \x in {1,2,3,5,7,8,12,13,14,15}{
\draw[solidBand] (\x+\delta,0.5) -- (\x+1-\delta,0.5);
}
\draw[Remover] (0,0) -- (17,0) -- (17,1) -- (0,1) -- (0,0);
\end{scope}

\begin{scope}[xshift=.53\textwidth,yshift=75]
\draw[thick,->,gray] (0,1.6) -- (0,1);
\end{scope}

\begin{scope}[yshift=65]
\draw[thick, gray] (0,0) grid (16,1);
\foreach \x in {1,2,3,5,7,8,12,13,14,15}{
\draw[solidBand, line width=1] (\x+\delta,0.5) -- (\x+1-\delta,0.5);
}
\foreach \x in {4,6}{
	\draw[thick,white] (\x,0) -- (\x,1);
	\draw[lightgray] (\x,0.05) -- (\x,0.95);
	\draw[dottedSep] (\x,0.025) -- (\x,1);
}
\foreach \x in {2,8,13,15}{
\draw[Tiling] (\x,0) -- (\x,1);
}
\draw[Remover] (0,0) -- (17,0) -- (17,1) -- (0,1) -- (0,0);
\end{scope}

\begin{scope}[xshift=.53\textwidth,yshift=10]
\draw[thick,->,gray] (0,1.6) -- (0,1);
\end{scope}

\begin{scope}
\draw[lightgray] (0,0) grid (17,1);
\foreach \x in {4,6}{
	\draw[dottedSep] (\x,0.025) -- (\x,1);
}
\foreach \x in {2,8,13,15,17}{
\draw[Tiling] (\x,0) -- (\x,1);
}
\draw[Tiling] (0,0) rectangle (17,1);

\foreach \x in {0,13,15}{
\draw [decorate,decoration={brace,amplitude=5pt,mirror,raise=2pt},thick,gray]
 (\x,0) -- (\x+2,0) node [midway,yshift=-14pt,black] {\small $2$};
}
\draw [decorate,decoration={brace,amplitude=5pt,mirror,raise=2pt},thick,gray]
 (8,0) -- (13,0) node [midway,yshift=-14pt,black] {\small $5$};
 \draw [decorate,decoration={brace,amplitude=5pt,mirror,raise=2pt},thick,gray]
 (2,0) -- (8,0) node [midway,yshift=-14pt,black] {\small $6$};
\end{scope}
\end{tikzpicture}
\end{center}

The final composition is the Fibonacci colored composition $(2,6,5,2,2)$ of 17, where part 6 is colored by \parbox{16.7ex}{\dottedTile{6}{2,4}{0.45}} and part 5 is colored by \parbox{14.2ex}{\plainTile{5}{0.45}}. Observe that in the resulting composition, we never create a part or secondary tile of length $1$ due to the defined placement of dotted and solid separators.
   
\item
By adding 1 to the last part, we get a natural bijective map between compositions of $n-1$ ending with an odd part and compositions of $n$ ending with an even part.

\item
The claimed bijection is easily shown using the tiled board representation discussed at the beginning of this proof. Given a tiled board corresponding to a composition in $\C_{F_{k-1}}(n)$, write a '2' on the unit tiles immediately to the left and right of every solid separator, and write a '1' on the unit tiles immediately to the left and right of every dotted separator. Then, excluding the end unit tiles of the board, fill any empty spaces with a '0'. Since the solid and dotted separators lead to pairs of 2s and 1s, respectively, this process clearly creates a ternary word of length $n-2$ in which 1 and 2 avoid runs of odd lengths. 

For example, the colored composition $(6,2,5,4)$ of 17 considered in part (a) gives:

\medskip
\begin{center}
\mixedTile{17}{6,8,13}{2,4,11,15}{0.6} \\
\begin{tikzpicture}[scale=0.6]
\begin{scope}[xshift=.53\textwidth, yshift=75]
\draw[white,thin] (0,1.8)--(0,1.8);
\draw[thick,->,gray] (0,1.6) -- (0,1);
\end{scope}

\begin{scope}[yshift=65]
\draw[lightgray] (0,0) grid (17,1);
\foreach \x in {6,8,13}{
	\draw[Tiling] (\x,0) -- (\x,1);
	\node at (\x-0.5,0.5) {\small $2$};
	\node at (\x+0.5,0.5) {\small $2$};
}
\foreach \x in {2,4,11,15}{
	\draw[dottedSep] (\x,0.025) -- (\x,1);
	\node at (\x-0.5,0.5) {\small $1$};
	\node at (\x+0.5,0.5) {\small $1$};
}
\foreach \x in {9}{
	\node at (\x+0.5,0.5) {\small $0$};
}
\draw[Tiling] (0,0) rectangle (17,1);
\end{scope}
\end{tikzpicture}
\end{center}
and we obtain the ternary word $111122220112211$ of length $15$.

The inverse map is straightforward; we just need to be diligent when placing separators to create the colored composition. Once a separator is placed between a pair of 1s or 2s, they cannot be paired again, so $2|2|2|2$ cannot happen; the correct way to insert separators would be $2|2 \; 2|2$. This, combined with the placement of 1s, is the reason why the created compositions will not contain any tiles of length $1$.

\item
Lastly, using the algorithm in the proof of Theorem~\ref{thm:noAdjacent0s} that maps Fibonacci colored compositions into ternary words, every element of $\C_{F_{k-1}}(n)$ uniquely corresponds to a ternary word of length $n-1$ with no adjacent nonzero letters and such that the first and last letters are 0. Removing these two 0s, we get a ternary word of length $n-3$ with the desired properties.
\end{enumerate}
\end{proof}

\section{Even-indexed Fibonacci colored compositions}
\label{sec:evenFibo}

We now turn our attention to compositions colored by the even-indexed Fibonacci numbers $1, 3, 8, 21, 55,\dots$, \oeis{A001906}. As a tool, we use the spotted tilings introduced by Hopkins~\cite{BHop12}. A {\em spotted tile of size} $j$ is a $1\times j$ rectangular tile with one spot at position $i$, where $1 \leq i \leq j$. Thus, there are exactly $j$ distinct spotted tiles of size $j$, and the number of spotted tilings of a $k$-board is the Fibonacci number $F_{2k}$. For example, there are only three tiles of size 3,

\smallskip
\def\locsf{0.6}
\begin{center}
 \spottedTile{3}{}{}{1}{\locsf}\quad \spottedTile{3}{}{}{2}{\locsf}\quad \spottedTile{3}{}{}{3}{\locsf} \parbox{1ex}{\!,}
\end{center}
and if we consider all spotted tilings of a $3$-board, we need to add the tilings
\[
\spottedTile{3}{1,2}{}{1,2,3}{\locsf}\qquad
\spottedTile{3}{1}{}{1,2}{\locsf}\qquad
\spottedTile{3}{1}{}{1,3}{\locsf}\qquad
\spottedTile{3}{2}{}{1,3}{\locsf}\qquad
\spottedTile{3}{2}{}{2,3}{\locsf}
\]
for a total of eight ($F_6$) spotted tilings of length 3.

Let $\C_{\rm spot}(k)$ denote the set of colored compositions of $k$ represented by the spotted tilings of length $k$. As shown in \cite{BHop12}, $|\C_{\rm spot}(k)| = F_{2k}$. We will use this interpretation of the even-indexed Fibonacci numbers to represent our colors in this section.

Let $L_n$ be the set of spanning trees\footnote{A spanning tree of a graph on $m$ vertices is a subset of $m-1$ edges that form a tree.} in the $n\times 2$ grid graph, also known as the $n$-Ladder graph. These trees connect the $2n$ vertices of the graph using only $2n-1$ edges with no cycles. 

For example, for $n=1,2$ we have 1 and 4 spanning trees, respectively,
\medskip
\def\locsf{0.55}
\begin{center}
\spanTree{0}{}{}{1}{\locsf} \;\tikz[baseline=-10]{\node{and}}
\spanTree{1}{}{}{1}{\locsf}
\spanTree{1}{}{}{2}{\locsf}
\spanTree{1}{}{1}{1,2}{\locsf}
\spanTree{1}{1}{}{1,2}{\locsf}
\end{center}
For $n=3$, there are 15 such spanning trees:
\medskip
\begin{center}
\spanTree{2}{}{}{1}{\locsf}
\spanTree{2}{}{}{2}{\locsf}
\spanTree{2}{}{}{3}{\locsf}
\spanTree{2}{1}{}{1,2}{\locsf}
\spanTree{2}{}{1}{1,2}{\locsf}\\[10pt]
\spanTree{2}{1}{}{1,3}{\locsf}
\spanTree{2}{}{1}{1,3}{\locsf}
\spanTree{2}{2}{}{1,3}{\locsf}
\spanTree{2}{}{2}{1,3}{\locsf}
\spanTree{2}{2}{}{2,3}{\locsf}\\[10pt]
\spanTree{2}{}{2}{2,3}{\locsf}
\spanTree{2}{1}{2}{1,2,3}{\locsf}
\spanTree{2}{2}{1}{1,2,3}{\locsf}
\spanTree{2}{1,2}{}{1,2,3}{\locsf}
\spanTree{2}{}{1,2}{1,2,3}{\locsf}
\end{center}

\medskip
The following theorem is the main result of this section.
\begin{theorem}
The set $\C_{F_{2k}}(n)$ of compositions of $n$, where part $k$ comes in $F_{2k}$ colors, is in one-to-one correspondence with the set of spanning trees in $L_n$, and they are also equinumerous with the set of $01$-avoiding words of length $n-1$ over the alphabet $\{0,1,2,3\}$. These sets are enumerated by the sequence 1, 4, 15, 56, 209, 780, 2911, 10864,\dots, see \oeis{A001353}.
\end{theorem}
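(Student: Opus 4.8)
The plan is to build everything on the spotted-tiling picture set up just before the statement: each part $k$ of a composition in $\C_{F_{2k}}(n)$ is drawn as a spotted tiling of a $k$-board, so the whole composition becomes an $n$-board carrying solid separators at the part boundaries, dotted separators between consecutive spotted tiles, and exactly one spot inside each spotted tile. From this common representation I would extract the two bijections separately, reusing the walking-algorithm idea of Theorem~\ref{thm:noAdjacent0s} for the words and a fan-graph decomposition for the trees.

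For the $01$-avoiding words I would read the $n-1$ interior separators from left to right and write a $3$ at each solid separator, a $2$ at each dotted separator, and, at the separators interior to a single spotted tile, a $1$ at every separator that precedes the spotted cell and a $0$ at every separator that follows it. Thus a spotted tile of size $j$ with its spot in cell $p$ contributes exactly the binary block $1^{p-1}0^{\,j-p}$. Consequently each maximal $\{0,1\}$-block has the form $1^{a}0^{b}$ and contains no factor $01$, while the letters $2$ and $3$ only ever separate such blocks; since a spotted tiling of size $j$ has $j-1$ interior separators and $S$ tiles produce $S-1$ delimiting letters, the word has length $(n-S)+(S-1)=n-1$ and avoids $01$. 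The inverse is forced: the $3$'s cut the word into parts, the $2$'s cut each part into blocks, and each block, being a $01$-avoiding binary word, equals some $1^{a}0^{b}$, which we read back as a spotted tile of size $a+b+1$ with its spot in cell $a+1$. (I would check the small cases $n=2,3$, where the $15$ valid length-$2$ words are all words except the single forbidden one, $01$.)

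For the spanning trees the conceptual bridge is the \emph{fan} on $k+1$ vertices, namely a path $b_1\cdots b_k$ together with an apex joined to every $b_i$. First I would set up a bijection between the $F_{2k}$ colors of a size-$k$ part and the spanning trees of this fan: deleting the apex edges of a fan spanning tree leaves a disjoint union of subpaths covering $b_1\cdots b_k$, and each subpath must be joined to the apex by exactly one edge (more would make a cycle, fewer would disconnect); recording the subpaths as the tiles and the attaching vertices as the spots recovers precisely a spotted tiling of a $k$-board. Next I would realize each fan as a width-$k$ block of the ladder whose \emph{entire top rail is present}: contracting that top rail to a single vertex turns the block into the fan, with the rungs becoming apex edges and the bottom-rail edges becoming path edges. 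Finally I would decompose an arbitrary spanning tree of $L_n$ at exactly those cuts where the top-rail edge is missing; such a cut must retain its bottom edge (else the tree disconnects), that bottom edge is then the unique edge across the cut and hence a bridge, so these cuts split the tree canonically into consecutive full-top-rail blocks joined in a path by single bottom edges. Reading the blocks off the parts of the composition and chaining them with bottom edges produces the tree, and the whole construction reverses step by step.

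The hard part will be the spanning-tree direction, precisely the two structural claims that make the decomposition work: that deleting a top-rail-absent cut really does split a ladder spanning tree into spanning trees of the two sub-ladders (a bridge/connectivity argument), so that the block decomposition is well defined and exhaustive; and that inside a full-top-rail block the contraction argument yields, for each maximal run of present bottom edges, exactly one rung attaching it to the contracted apex, so that blocks correspond bijectively to fan spanning trees and hence to spotted tilings. Once these are in place, an edge count ($\sum(k_j-1)$ top edges, $\sum(\text{size}-1)$ interior bottom edges, one rung per tile, and $m-1$ connecting bottom edges, totalling $2n-1$) confirms we obtain genuine spanning trees, and both bijections simultaneously give the enumeration, matching the generating function $x/(1-4x+x^2)$ of \oeis{A001353}.
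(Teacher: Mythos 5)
Your proposal is correct. The word half is essentially identical to the paper's: the paper walks the board writing 3 at solid separators, 2 at dotted ones, 1 at steps left of the spot and 0 at steps right of it, exactly your $1^{a}0^{b}$ blocks with the same forced inverse. The spanning-tree half, however, takes a genuinely different route. The paper superimposes the $n$-ladder directly on the decorated board and deletes edges by local rules (bottom edge at each solid separator, top edge at each dotted separator, every rung not meeting a spot), then justifies that the result is a spanning tree by a global count -- $3n-2$ edges minus $n-1$ deletions leaves $2n-1$ -- together with brief connectivity and acyclicity remarks, the inverse being read off the same picture. You instead start from an arbitrary spanning tree of $L_n$, cut it at the positions whose top-rail edge is absent (where the bottom edge is forced and is the unique crossing edge), split it into full-top-rail blocks joined by bottom bridges, contract each block's top rail to get a fan, and invoke the fan--spotted-tiling correspondence; note your convention is the paper's composed with the top--bottom reflection of the ladder (your part boundaries are missing top edges, the paper's are missing bottom edges), which is harmless since both are bijections. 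What your route buys: bijectivity and surjectivity are transparent, because each step (removing bridges from a tree, contracting a subtree of a spanning tree, the fan correspondence) is a standard reversible operation, and it cleanly isolates the known fact that the fan on $k+1$ vertices has $F_{2k}$ spanning trees, making the part-by-part (\textsc{invert}-transform) structure explicit. What the paper's route buys: a more elementary, purely pictorial construction -- one overlay and local deletion rules -- at the cost of a less structural argument for why the output is always a spanning tree and for why every spanning tree arises.
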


\begin{proof}
We will use the spotted tilings, $\C_{\rm spot}(k)$, as secondary tilings of our compositions. The bijection between $\C_{F_{2k}}(n)$ and $L_n$ is given by the following algorithm:

\begin{enumerate}[(i)]
\item Given an element of $\C_{F_{2k}}(n)$, represent it as a tiling of an $n$-board with its parts colored by a secondary spotted tiling. 
\item Superimpose an $n$-Ladder graph over the given composition so that their centers coincide.
\item Remove horizontal edges of the graph as follows: If a square subgraph of the grid contains a solid separator of the composition, remove the bottom edge of that square, and if it contains a dotted separator of the composition, remove the top edge of that square.
\item Vertical edges of the graph are then removed based on the positions of the spotted tiles in the composition. If an edge intersects a spot, it is kept, and all other vertical edges are then removed. The remaining edges make a spanning tree in $L_n$.
\end{enumerate}

To illustrate our map, we proceed with an example. Given the composition $(2,1,5,2)$ of 10, where the first part $2$ is colored by \parbox{6ex}{\spottedTile{2}{}{1}{1,2}{0.45}}, part $5$ is colored by \parbox{14.2ex}{\spottedTile{5}{}{4}{2,5}{0.45}}, and the last part $2$ is colored by \parbox{6ex}{\spottedTile{2}{}{}{1}{0.45}}, we follow the above steps to create a matching spanning tree in $L_{10}$: 

\medskip
\begin{center}
\hskip-3.5pt
\spottedTile{10}{2,3,8}{1,7}{1,2,3,5,8,9}{0.75}

\tikz{\node[white] at (0,0.4){}; \node[white] at (0,0){}; \draw[thick,->,gray] (0,0.4) -- (0,0);}

\begin{tikzpicture}[scale=0.75]
\draw[lightgray, thin] (0,0) grid (10,1);
\foreach \x in {2,3,8}{
	\draw[Tiling, black!60] (\x,0) -- (\x,1);
}
\foreach \x in {1,7}{
	\draw[dottedSep, tileColor!70] (\x,0.06) -- (\x,1);
}
\foreach \x in {1,2,3,5,8,9}{
	\draw[color=spotColor!70, fill=spotColor!70] (\x-0.5,0.5) circle (3pt); 
}
\draw[Tiling, black!60] (0,0) rectangle (10,1);
\draw[Tree, line width=2.5, opacity=0.5] (0.5,1) -- (9.5,1);
\draw[Tree, line width=2.5, opacity=0.5] (0.5,0) -- (9.5,0);
\foreach \x in {0,...,9}{
	\draw[Tree, opacity=0.5] (\x+0.5,0) -- (\x+0.5,1); 
	\node[node,opacity=0.5] at (\x+0.5,0) {};
	\node[node,opacity=0.5] at (\x+0.5,1) {};
 }
\end{tikzpicture}

\tikz{\node[white] at (0,0.4){}; \node[white] at (0,0){}; \draw[thick,->,gray] (0,0.4) -- (0,0);}

\begin{tikzpicture}[scale=0.75]
\draw[lightgray, very thin] (0,0) grid (10,1);
\draw[Tiling,black!30] (0,0) rectangle (10,1);
\draw[Tree,opacity=0.8] (0.5,0) -- (9.5,0);
\draw[Tree,opacity=0.8] (0.5,1) -- (9.5,1);
\foreach \x in {2,3,8}{
	\draw[Tiling, black!30] (\x,0) -- (\x,0.959);
	\draw[spanRem] (\x-0.5,0) -- (\x+0.5,0);
	\draw[line width=1.5, black!30] (\x-0.5,0) -- (\x+0.5,0);
}
\foreach \x in {1,7}{
	\draw[dottedSep, tileColor!50] (\x,0.06) -- (\x,1);
	\draw[spanRem] (\x-0.5,1) -- (\x+0.5,1);
	\draw[line width=1.5, black!30] (\x-0.5,1) -- (\x+0.5,1);
}
\foreach \x in {1,2,3,5,8,9}{
	\draw[color=spotColor!50,fill=spotColor!50] (\x-0.5,0.5) circle (3pt); 
	\draw[Tree,opacity=0.8] (\x-0.5,0) -- (\x-0.5,1); 
}
\foreach \x in {0,...,9}{ 
	\node[node,opacity=0.9] at (\x+0.5,0) {};
	\node[node,opacity=0.9] at (\x+0.5,1) {};
 }
\end{tikzpicture}

\tikz{\node[white] at (0,0.4){}; \node[white] at (0,0){}; \draw[thick,->,gray] (0,0.4) -- (0,0);}

\spanTree{9}{2,3,8}{1,7}{1,2,3,5,8,9}{0.75}
\end{center}

In general, the process starts by superimposing an $n$-Ladder graph (having $3n-2$ edges) on top of the given composition. If the composition has $k$ dots, then the $n$-board representing it must have $k$ tiles, including its secondary tiling, and therefore, it will have $k-1$ separators -- solid and dotted combined. Then, vertical edges are removed based on the absence of spots within the coloring. As there are $n-k$ spaces without a spot, the process will remove $n-k$ edges. In addition, horizontal edges are removed based on the separators of the composition, so this part of the algorithm removes $k-1$ more edges. Thus, from the original $3n-2$ edges, we end up removing $(n-k)+(k-1) = n-1$ edges, leaving the remaining graph with $2n-1$ edges that connect all of its $2n$ vertices. Note that when two adjacent horizontal edges are removed, there must be a spot between the two separators within the composition. Therefore, the incident vertex is connected to the graph by a vertical edge. Also, since there may not be two spots in a single tile, the resulting graph cannot have cycles. In conclusion, our algorithm always provides a spanning tree in $L_n$.

The inverse map is clear from the process described above. Once the spanning tree is drawn, an $n$-board is superimposed on the tree with their centers aligned. Then, for any opening along the top of the tree, a dotted separator is placed on the $n$-board within the corresponding unit square. Similarly, for any opening along the bottom of the tree, a solid separator is placed within the unit square. Finally, dots are placed on all unit squares that intersect a vertical edge of the tree. Removing the tree, we are left with a composition of $n$ colored by spotted tilings.

\medskip
The bijection to the set of $01$-avoiding words over the alphabet $\{0,1,2,3\}$ relies on a simple algorithm.
Given an element of $\C_{F_{2k}}(n)$, we represent it as a tiling of an $n$-board with its parts colored by spotted tilings. Then, walking the board from left to right in unit steps, we write a `3' if at a solid separator and a `2' if at a dotted separator from the secondary tiling.  Steps with no separator happen inside a spotted tile of length greater than one.  Within each spotted tile, we write `1' if at a step to the left of the spot and `0' if at a step to the right of the spot. Clearly, any words created this way will not contain the subword $01$.

For example,

\smallskip
\begin{center}
\hskip-4pt \spottedTile{10}{2,3,8}{1,7}{1,2,3,5,8,9}{0.75}
\vskip3pt
\begin{tikzpicture}[scale=0.75]
\foreach \x in {1,...,9}{\draw[->] (\x,0) -- (\x,-0.45);}
\foreach \x in {2,3,8}{
	\node[below=10pt] at (\x,0) {\small $3$};
}
\foreach \x in {1,7}{
	\node[below=10pt] at (\x,0) {\small $2$};
}
\foreach \x in {5,6,9}{
	\node[below=10pt] at (\x,0) {\small $0$};
}
\node[below=10pt] at (4,0) {\small $1$};
\end{tikzpicture}
\end{center}
In other words, the composition $(2,1,5,2)$ of 10, colored as depicted above, corresponds to the word $233100230$ of length 9.
\end{proof}

\begin{remark}
The sequence \oeis{A001353} is also known to count the number of $2\times n$ simple rectangular mazes. With no intention of going into details, here is an example that illustrates how such rectangular mazes are related to the elements of $\C_{F_{2k}}(n)$ and the various objects described in the previous theorem.

\medskip
\begin{center}
\def\y{2.7}
\begin{tikzpicture}[scale=0.75]
\begin{scope}[yshift=80]
\foreach \x in {-2.85,-1.85,3.15}{\node[above=10pt] at (\x,\y) {\small $3$};}
\foreach \x in {-3.85,2.15}{\node[above=10pt] at (\x,\y) {\small $2$};}
\foreach \x in {0.15,1.15,4.15}{\node[above=10pt] at (\x,\y) {\small $0$};}
\node[above=10pt] at (-0.85,\y) {\small $1$};
\node at (0,2.5) {\spottedTile{10}{2,3,8}{1,7}{1,2,3,5,8,9}{0.75}};
\end{scope}
\begin{scope}[yshift=76]
\foreach \x in {-3.85,2.15}{\draw[solidBand] (\x,1.5) -- (\x,0.75);}
\foreach \x in {-2.85,-1.85,3.15}{\draw[Tiling] (\x,0) -- (\x,0.75);}
\draw[Tiling,gray!80] (-1.85,0.75) -- (-0.85,0.75);
\draw[Tiling,gray!80] (0.13,0.75) -- (2.13,0.75);
\draw[Tiling,gray!80] (4.16,0.75) -- (5.16,0.75);
\draw[Tiling] (-4.84,0) -- (-4.84,1.5) -- (4.16,1.5);
\draw[Tiling] (-3.84,0) -- (5.16,0) -- (5.16,1.5);
\end{scope}
\def\maze{cyan!50!black}
\begin{scope}[yshift=16]
\node at (0,0.75) {\spanTree{9}{2,3,8}{1,7}{1,2,3,5,8,9}{0.74}};
\foreach \x in {-3.85,2.15}{\draw[Tiling,\maze] (\x,1.5) -- (\x,0.75);}
\foreach \x in {-2.85,-1.85,3.15}{\draw[Tiling,\maze] (\x,0) -- (\x,0.75);}
\draw[Tiling,\maze] (-1.885,0.75) -- (-0.85,0.75);
\draw[Tiling,\maze] (0.13,0.75) -- (2.185,0.75);
\draw[Tiling,\maze] (4.16,0.75) -- (5.16,0.75);
\draw[Tiling,\maze] (-4.84,0) -- (-4.84,1.5) -- (4.16,1.5);
\draw[Tiling,\maze] (-3.84,0) -- (5.16,0) -- (5.16,1.5);
\end{scope}
\begin{scope}[yshift=-16]
\node at (0,0) {\spanTree{9}{2,3,8}{1,7}{1,2,3,5,8,9}{0.75}};
\end{scope}
\end{tikzpicture}
\end{center}
\end{remark}

\section{Odd-indexed Fibonacci colored compositions}
\label{sec:oddFibo}

A partition of the set $[n]=\{1,\dots,n\}$ is a set of disjoint nonempty sets (called blocks) whose union is $[n]$. Every partition $\pi$ of $[n]$ can be represented by an arc digram obtained by drawing an arc between each pair of integers that appear consecutively in the same block of $\pi$. For example, the partition $\pi = \{\{1,4\}, \{2,3,6\}, \{5\}, \{7,8\}\}$ (also written as $14\vert 236\vert 5\vert 78$) corresponds to

\medskip
\begin{center}
\drawarc{0.7}{8}{1/4,2/3,3/6,7/8} \\
\tikz{
\foreach \i in {1,...,8}{
 \node at (0.7*\i,0) {\small \i};
}}
\end{center}
Two arcs $(i_1,j_1)$ and $(i_2,j_2)$ make a {\em crossing} if $i_1<i_2<j_1<j_2$, and they make a {\em nesting} if $i_1<i_2<j_2<j_1$. A {\em noncrossing/nonnesting partition} is a partition with no crossings/nestings. Some examples are shown in Table~\ref{tab:partition_samples}. The set of noncrossing partitions of $[n]$ is equinumerous with the set of nonnesting partitions of $[n]$, both counted by the Catalan numbers \oeis{A000108}. 

\medskip
\begin{table}[ht]
\def\ff{0.55}
\small
\begin{tabular}{c@{\hskip9pt}|@{\hskip9pt}c@{\hskip12pt}|@{\hskip6pt}c@{\hskip12pt}}
Nonnesting & Noncrossing & Nonnesting \& noncrossing \\[5pt]
\rule[0ex]{0ex}{3.5ex}
\drawarc{0.7}{6}{1/4,3/5,5/6} 
& \drawarc{0.7}{6}{1/4,2/3,4/6}
& \drawarc{0.7}{6}{1/4,4/5,5/6}
\end{tabular}
\bigskip
\caption{Arc diagrams for the partitions $14\vert 2\vert 356$, $146\vert 23\vert 5$, and $1456\vert 2\vert 3$.}
\label{tab:partition_samples}
\end{table}

A partition of $[n]$ is said to be {\em indecomposable} if no subset of its blocks is a partition of $[k]$ with $k<n$. In other words, an indecomposable partition is one whose corresponding arc diagram cannot be separated into two disjoint arc diagrams with consecutive nodes. For example, the partitions in Table~\ref{tab:partition_samples} are all indecomposable, but our first example $14\vert 236\vert 5\vert 78$ is decomposable because $14\vert 236\vert 5$ is a partition of $\{1,\dots,6\}$.

Let $\P_{\rm ncn}(n)$ be the set of partitions of $[n]$ that are both noncrossing and nonnesting, and let $\P_{\rm ncn}^{i}(n)$ be the subset of such partitions that are indecomposable. Table~\ref{tab:P_ncni} shows a graphic representation of the elements of $\P_{\rm ncn}^{i}(n)$ for $n=1,\dots,5$.

\begin{table}[ht]
\def\ff{0.55}
\small
\begin{tabular}{c@{\hskip9pt}|@{\hskip9pt}c@{\hskip12pt}|@{\hskip6pt}c@{\hskip12pt}|@{\hskip6pt}c@{\hskip12pt}|@{\hskip6pt}c@{\hskip10pt}c}
\rule[-1ex]{0ex}{3ex}
$n=1$ & $n=2$ & $n=3$ & $n=4$ & \multicolumn{2}{c}{$n=5$} \\ \hline
\rule[0ex]{0ex}{3.5ex}
\tikz[scale=\ff]{\draw[fill,arc1] (0,0) circle (0.06);} & \drawarc{\ff}{2}{1/2} & \drawarc{\ff}{3}{1/2,2/3} & \drawarc{\ff}{4}{1/2,2/3,3/4} & \drawarc{\ff}{5}{1/2,2/3,3/4,4/5} & \drawarc{\ff}{5}{1/4,4/5} \\[1pt]
\hskip4pt $1$ &\;\;$12$ &\;\;$123$ &\;\;$1234$ &\;\;$12345$ &\;\; $145\vert 2\vert 3$ \\[4pt]
&& \drawarc{\ff}{3}{1/3} & \drawarc{\ff}{4}{1/3,3/4} & \drawarc{\ff}{5}{1/3,3/4,4/5} & \drawarc{\ff}{5}{1/2,2/5} \\[1pt]
&&\;\;$13\vert 2$ &\;\;$134\vert 2$ &\;\;$1345\vert 2$ &\;\;$125\vert 3\vert 4$ \\[4pt]
&&& \drawarc{\ff}{4}{1/2,2/4} & \drawarc{\ff}{5}{1/2,2/4,4/5} & \drawarc{\ff}{5}{1/5} \\[1pt]
&&&\;\;$124\vert 3$ &\;\;$1245\vert 3$ &\;\;$15\vert 2\vert 3\vert 4$ \\[4pt]
&&& \drawarc{\ff}{4}{1/4} & \drawarc{\ff}{5}{1/2,2/3,3/5} & \drawarc{\ff}{5}{1/3,3/5} \\[1pt]
&&&\;\;$14\vert 2\vert 3$ &\;\;$1235\vert 4$ &\;\;$135\vert 2\vert 4$
\end{tabular}
\bigskip
\caption{Elements of $\P_{\rm ncn}^{i}(n)$ with their arc representation.}
\label{tab:P_ncni}
\end{table}

\begin{lemma}
If $\NCNi(n)=\big|\P_{\rm ncn}^{i}(n)\big|$, then
\begin{gather*}
 \NCNi(1)=1, \;\; \NCNi(2)=1, \text{ and} \\
 \NCNi(n) = 2\cdot\NCNi(n-1) \;\text{ for } n>2.
\end{gather*}
Thus $\sum_{n=1}^\infty \NCNi(n) x^n = \frac{x-x^2}{1-2x} = x+x^2+2x^3+4x^4+8x^5+\cdots$, cf.~\cite[A011782]{Sloane}.
\end{lemma}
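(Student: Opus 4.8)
The plan is to pin down the exact geometric shape of a noncrossing-and-nonnesting (NCN) partition and then impose indecomposability, which will collapse the whole count to a single binary choice per interior point. First I would record the basic structural fact about arcs. Fix two arcs $(i_1,j_1)$ and $(i_2,j_2)$ with $i_1<i_2$. If $i_1<i_2<j_1$, then since distinct arcs cannot share a right endpoint we have either $j_1<j_2$ (a crossing) or $j_2<j_1$ (a nesting); both are forbidden. Hence any two arcs are either disjoint ($j_1<i_2$) or abut at a shared endpoint ($i_2=j_1$), and a shared endpoint can only occur within a single block. Consequently the arcs of an NCN partition are non-overlapping intervals that may only be chained end to end, and any point lying strictly inside an arc must be a singleton, because anything it were joined to would create a crossing or a nesting with the surrounding arc.

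Next I would impose indecomposability. Indecomposability of the arc diagram is precisely the statement that every gap between $k$ and $k+1$ (for $1\le k\le n-1$) is spanned by some arc. Since the arcs are non-overlapping intervals, covering all gaps forces them to tile $[1,n]$ as $[a_0,a_1],[a_1,a_2],\dots,[a_{r-1},a_r]$ with $a_0=1$ and $a_r=n$. Because consecutive arcs share an endpoint and shared endpoints occur only within one block, all of $a_0,\dots,a_r$ lie in a single block $S$, the unique block containing both $1$ and $n$. Every point not in $S$ lies strictly inside one of these arcs and is therefore a singleton. Thus an element of $\P_{\rm ncn}^{i}(n)$ is completely determined by the choice of the ``spanning block'' $S=\{1=a_0<a_1<\cdots<a_r=n\}$, and conversely any such $S$, with the remaining points taken as singletons, produces a valid indecomposable NCN partition.

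This identifies $\P_{\rm ncn}^{i}(n)$ with the set of subsets of $[n]$ containing both $1$ and $n$, of which there are $2^{n-2}$ for $n\ge 2$ (a free choice on each of $2,\dots,n-1$); for $n=1$ the only partition is the singleton and for $n=2$ the only indecomposable one is $12$. The base cases $\NCNi(1)=\NCNi(2)=1$ and the recurrence $\NCNi(n)=2\,\NCNi(n-1)$ for $n>2$ follow at once, the factor $2$ being exactly the extra binary choice introduced by the new interior point $n-1$ when passing from $[n-1]$ to $[n]$. The generating function then comes out by the standard manipulation: writing $G(x)=\sum_{n\ge 1}\NCNi(n)x^n$, the recurrence gives $G(x)-x-x^2=2x\,(G(x)-x)$, whence $G(x)=\frac{x-x^2}{1-2x}$.

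The only real obstacle is the first structural step: correctly using the crossing/nesting dichotomy to rule out every arc configuration other than disjoint or end to end, and then observing that interior points are forced to be singletons. Once that picture is secured, indecomposability reduces the count to the $2^{n-2}$ subset count and everything else is bookkeeping. Equivalently, one may phrase the recurrence bijectively: each $\pi\in\P_{\rm ncn}^{i}(n-1)$ yields exactly two partitions in $\P_{\rm ncn}^{i}(n)$, obtained either by appending $n$ to the spanning block through the arc $(n-1,n)$, or by redirecting the final arc from $n-1$ to $n$ and leaving $n-1$ as a new singleton.
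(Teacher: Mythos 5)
Your proof is correct, but it takes a genuinely different route from the paper's. The paper establishes the recurrence directly by a two-case bijection: for $n>2$ it splits $\P_{\rm ncn}^{i}(n)$ according to whether $n-1$ is a singleton, sends the partitions in which $n-1$ is a singleton to $\P_{\rm ncn}^{i}(n-1)$ by deleting $n-1$ and relabeling $n$ as $n-1$, and sends those in which $n-1$ is not a singleton (so that $1$, $n-1$, $n$ share a block) to $\P_{\rm ncn}^{i}(n-1)$ by deleting $n$. The two operations you mention in your closing paragraph are exactly the inverses of these two maps, so your last remark recovers the paper's argument. What you do differently is to first prove a full structural characterization: the arcs of an indecomposable NCN partition must chain end to end as $1=a_0<a_1<\cdots<a_r=n$, tiling $[1,n]$, with all the $a_i$ in a single block and every other point forced to be a singleton. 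This yields the stronger closed form $\NCNi(n)=2^{n-2}$ for $n\ge 2$, identifying $\P_{\rm ncn}^{i}(n)$ with the subsets of $[n]$ containing $1$ and $n$, from which the recurrence and the generating function are immediate bookkeeping. Your route costs more work up front (the crossing/nesting dichotomy for overlapping arcs, the forced singletons, the tiling argument, all of which you carry out correctly), but it buys an explicit description of the objects being counted; notably, the structure you prove is tacitly assumed by the paper when it asserts without justification that $1$, $n-1$, and $n$ must all lie in the same block, so your proof fills in a step the paper leaves to the reader.
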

\begin{proof}
For $n=1,2$, we have $\P_{\rm ncn}^{i}(1)=\{\{1\}\}$ and $\P_{\rm ncn}^{i}(2)=\{\{1,2\}\}$. For $n>2$, the set $\P_{\rm ncn}^{i}(n)$ can be split into two disjoint sets depending on whether or not $n-1$ is a singleton. 

Let $A_n$ be the set of such partitions that have $n-1$ as a singleton. By removing $n-1$ and replacing $n$ by $n-1$, we get a bijective map from $A_n$ to $\P_{\rm ncn}^{i}(n-1)$. 

Now let $A'_n$ be the subset of $\P_{\rm ncn}^{i}(n)$ consisting of partitions where $n-1$ is not a singleton. In such a partition, 1, $n-1$, and $n$ must all be in the same block. If we remove $n$, we get a unique element of $\P_{\rm ncn}^{i}(n-1)$. This gives a bijection between $A'_n$ and $\P_{\rm ncn}^{i}(n-1)$. 

Since $\P_{\rm ncn}^{i}(n) = A_n\cup A'_n$ and $A_n\cap A'_n=\varnothing$, we get the claimed recurrence relation.
\end{proof}

\begin{remark}
Every partition of $[n]$ can be obtained by `concatenating' indecomposable ones. Thus, the set of partitions of $[n]$ that are both noncrossing and nonnesting is equinumerous with the set of compositions of $n$ where part $k$ comes in $\NCNi(k)$ colors. If we let $\NCN(n)=\big|\P_{\rm ncn}(n)\big|$, this means that $\{\NCN(n)\}_{n\in\mathbb{N}}$ is the {\sc invert} transform of $\{\NCNi(n)\}_{n\in\mathbb{N}}$, so
\begin{equation*}
  \sum_{n=1}^\infty \NCN(n) x^n =\frac{\frac{x-x^2}{1-2x}}{1-\frac{x-x^2}{1-2x}} = \frac{x-x^2}{1-3x+x^2}.
\end{equation*}
This is the generating function of the sequence $\{F_{2n-1}\}_{n\in\mathbb{N}}$ of odd-indexed Fibonacci numbers $1, 2, 5, 13, 34,\dots$, \oeis{A001519}. See also Marberg~\cite[Example~4.2]{Mar13}.
\end{remark}

Before going into compositions colored by the odd-indexed Fibonacci numbers, we first introduce a different class of noncrossing partitions more suitable for our purposes.

Let $\pi$ be a partition of $[n]$.  A block $B_i=\{i_1,\dots,i_r\}\subset\pi$ is said to be nested by the block $B_j=\{j_1,\dots,j_s\}\subset\pi$ if there is an index $\ell$ such that 
\[ j_\ell < i_1<\cdots< i_r < j_{\ell+1}. \] 
Using the standard arc representation of $\pi$, this means that $B_i$ lies underneath an arc of $B_j$. 

We say that $\pi=\{B_1,\dots,B_k\}$ is {\em totally nested} if either $k=1$, or for every $1<i\le k$, the block $B_i$ is nested by $B_{i-1}$. Observe that a totally nested partition is necessarily indecomposable and noncrossing. We let $\P_{\rm tn}(n)$ denote the set of partitions of $[n]$ that are totally nested. 

\begin{theorem}
The sets $\P_{\rm ncn}^{i}(n)$ and $\P_{\rm tn}(n)$ are equinumerous.
\end{theorem}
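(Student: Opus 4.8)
The plan is to show that $\P_{\rm tn}(n)$ is enumerated by the same sequence as $\P_{\rm ncn}^{i}(n)$. By the preceding lemma, $\big|\P_{\rm ncn}^{i}(n)\big|=\NCNi(n)$, where $\NCNi(1)=\NCNi(2)=1$ and $\NCNi(n)=2\,\NCNi(n-1)$ for $n>2$, so that $\NCNi(n)=2^{n-2}$ for $n\ge2$ (and $\NCNi(1)=1$). Hence it suffices to prove $\big|\P_{\rm tn}(n)\big|$ equals this count, and I would do so by a direct bijection out of $\P_{\rm tn}(n)$ rather than by matching the two partition families element by element.

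First I would pin down the shape of a totally nested partition. If $\pi=\{B_1,\dots,B_k\}\in\P_{\rm tn}(n)$, then since each $B_i$ with $i>1$ is nested by $B_{i-1}$, all of $B_2,\dots,B_k$ lie underneath a single arc of $B_1$; in particular $1,n\in B_1$ and $B_1$ has exactly one nonempty gap. Iterating this observation, $\pi$ is encoded by a chain of nested intervals $[1,n]=[l_0,r_0]\supsetneq[l_1,r_1]\supsetneq\cdots\supsetneq[l_{k-1},r_{k-1}]$ with $1=l_0<l_1<\cdots<l_{k-1}\le r_{k-1}<\cdots<r_1<r_0=n$, where $B_i=[l_{i-1},l_i-1]\cup[r_i+1,r_{i-1}]$ for $i<k$ and $B_k=[l_{k-1},r_{k-1}]$. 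Each inner interval is the gap of the block enclosing it, so the left and right runs $[l_{i-1},l_i-1]$ and $[r_i+1,r_{i-1}]$ are nonempty for $i<k$.

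The key step is to read this data off the arc diagram. For $i\in\{1,\dots,n-1\}$, record whether $i$ and $i+1$ lie in the same block; call $i$ a \emph{break} otherwise. I would show that the set $S\subseteq\{1,\dots,n-1\}$ of breaks is exactly $\{\,l_j-1:1\le j\le k-1\,\}\cup\{\,r_j:1\le j\le k-1\,\}$, and that these two groups are disjoint, since $l_j-1<l_{k-1}\le r_{k-1}\le r_j$ for all relevant $j$. Thus $|S|=2(k-1)$ is even, and --- this is the crucial point --- once $S$ is listed as $p_1<\cdots<p_{2m}$ the nesting forces the pairing $l_j=p_j+1$ and $r_j=p_{2m+1-j}$. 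Conversely, starting from an \emph{arbitrary} even subset $S$, this forced pairing rebuilds a valid chain of nested intervals: the inequalities $l_{j-1}<l_j$, $r_j<r_{j-1}$, and $l_j\le r_j$ all hold automatically because the $p_j$ are strictly increasing, and the reconstructed blocks tile $[1,n]$ with break set exactly $S$. This gives a bijection between $\P_{\rm tn}(n)$ and the even-cardinality subsets of an $(n-1)$-element set, of which there are $2^{n-2}$ for $n\ge2$ and $1$ for $n=1$, matching $\NCNi(n)$ and completing the proof via the lemma.

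I expect the main obstacle to be precisely the bookkeeping behind the ``forced pairing'': verifying that the breaks always separate cleanly into the left boundaries $l_j-1$ and the right boundaries $r_j$ with no overlap, and that an arbitrary even subset is therefore realizable with the nesting coming out consistent. (A naive alternative --- splitting $\P_{\rm tn}(n)$ by whether $n-1,n$ share a block and deleting $n$, in parallel with the lemma --- works cleanly only in the case where they do share a block; in the other case deleting $n$ destroys indecomposability, so the even-subset bijection is what I would rely on to resolve it. Indeed, that bijection also yields the recurrence directly, by splitting $\P_{\rm tn}(n)$ according to whether $n-1\in S$, each part having size $2^{n-3}=\big|\P_{\rm tn}(n-1)\big|$.)
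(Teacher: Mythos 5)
Your proof is correct, but it takes a genuinely different route from the paper. The paper constructs an explicit bijection $\varphi:\P_{\rm ncn}^{i}(n)\to \P_{\rm tn}(n)$: it uses the fact that an indecomposable noncrossing--nonnesting partition is a single block $B_1$ containing $1$ and $n$ together with singletons, writes $B_1$ as maximal runs $R_1,\dots,R_k$ separated by gaps $S_1,\dots,S_{k-1}$, and pairs these sets outside-in ($R_1\cup R_k$, $S_1\cup S_{k-1}$, $R_2\cup R_{k-1},\dots$) to produce the totally nested blocks; equinumerosity follows without any counting, and in particular without the preceding lemma. You instead count $\P_{\rm tn}(n)$ directly: your structural analysis (a totally nested partition is equivalent to a chain of nested intervals, hence to its break set, which is an arbitrary even-cardinality subset of $[n-1]$ under the forced first-half/second-half pairing) is sound --- the disjointness $l_j-1<l_{k-1}\le r_{k-1}\le r_{j'}$ and the reconstruction inequalities all check out --- so $\big|\P_{\rm tn}(n)\big|=2^{n-2}$ for $n\ge 2$, matching $\NCNi(n)$ from the lemma. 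What each approach buys: the paper's argument yields an element-to-element correspondence, in keeping with the bijective spirit of the paper (and is self-contained); yours yields a clean closed-form count and a transparent encoding of totally nested partitions by subsets, which is independently useful, but the equinumerosity it establishes is only numerical (a bijection would have to be assembled by composing your encoding with a matching encoding of $\P_{\rm ncn}^{i}(n)$, e.g.\ by the lemma's recursive structure). Your parenthetical observation is also accurate: the lemma's deletion recursion does not transfer naively to $\P_{\rm tn}(n)$, which is precisely why your break-set bijection, or the paper's $\varphi$, is needed.
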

\begin{proof}
We will provide a bijective map $\varphi:\P_{\rm ncn}^{i}(n)\to \P_{\rm tn}(n)$.

Let $\pi\in\P_{\rm ncn}^{i}(n)$. If $\pi$ is a one block partition, we let $\varphi(\pi)=\pi$. Otherwise, $\pi$ consists of a first block $B_1$ that must contain the elements $1$ and $n$, together with other blocks that are all singletons. In that case, we define the map $\varphi$ as follows:

\begin{enumerate}[(i)]
\item Write the block $B_1$ as a disjoint union of sets
\[ B_1 = R_1\cup\cdots\cup R_k \]
such that every $R_j$ consists of a singleton or a sequence of consecutive numbers with 
\[ \min(R_{j+1}) - \max(R_{j})>1 \text{ for every } j\in\{1,\dots,k-1\}. \]
\item Let $S_1,\dots,S_{k-1}$ be the gaps between the $R_j$'s, that is, for every $i\in\{1,\dots,k-1\}$ let
\[ S_i = \{\max(R_i)+1,\dots,\min(R_{i+1})-1\}. \]
The set $S_1\cup\cdots\cup S_{k-1}$ is the union of the singletons of $\pi$. Thus, we get a partition of $[n]$,
\[ R_1<S_1<R_2<S_2<\cdots<R_{k-1}<S_{k-1}<R_k, \]
where $A<B$ means that $a<b$ for every $a\in A$ and $b\in B$.
\item Finally, define $\varphi(\pi)$ by connecting pairs of the above sets in the only way they can create a sequence of totally nested blocks. In other words,
\[ \varphi(\pi) = \{R_1\cup R_k,\ S_1\cup S_{k-1},\ R_2\cup R_{k-1}, \dots\}. \]
Figure~\ref{fig:varphi} shows a visualization of this step.

\begin{figure}[ht!] 
\def\R{\rule[-0.3ex]{0ex}{2ex}}
\tikzstyle{block}=[rectangle, draw, rounded corners, line width=1.2]
\begin{tikzpicture}[scale=0.8]
\draw [black, line width=1.2] (-5.5,0) parabola bend (0,1.5) (5.5,0);
\draw [arcColor, line width=1.2] (-4,0) parabola bend (0,1.17) (4,0);
\draw [black, line width=1.2] (-2.5,0) parabola bend (0,0.75) (2.5,0);
\node at (-5.5,0) [block, fill=black!15] {\scriptsize \R\;\:$R_1$\;\:};
\node at (-4,0) [block, draw=arcColor, fill=arcColor!15] {\scriptsize \R\;\:$S_1$\;\:};
\node at (-2.5,0) [block, fill=black!15] {\scriptsize \R\;\:$R_2$\;\:};
\node at (2.5,0) [block, fill=black!15] {\scriptsize \R$R_{k-1}$};
\node at (4,0) [block, draw=arcColor, fill=arcColor!15] {\scriptsize \R$S_{k-1}$};
\node at (5.5,0) [block, fill=black!15] {\scriptsize \R\;\:$R_{k}$\;\:};
\draw [fill, color=arcColor] (-1,0) circle (0.06);
\draw [fill] (0,0) circle (0.06);
\draw [fill, color=arcColor] (1,0) circle (0.06);
\end{tikzpicture}
\caption{Map $\varphi$ construction of a totally nested partition.}
\label{fig:varphi}
\end{figure}
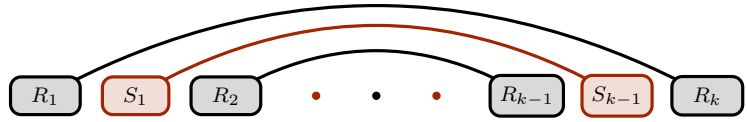
\end{enumerate}

For example, for $\pi=\{\{1,4,6,8,10,11\},\{2\},\{3\},\{5\},\{7\},\{9\}\}$, our algorithm gives
\begin{gather*} 
R_1=\{1\},\; R_2=\{4\},\; R_3=\{6\},\; R_4=\{8\},\; R_5=\{10,11\}, \\
S_1=\{2,3\},\; S_2=\{5\},\; S_3=\{7\},\; S_4=\{9\}, 
\end{gather*}
and therefore,
\begin{equation*}
\varphi(\pi) = \{\{1,10,11\},\{2,3,9\},\{4,8\},\{5,7\},\{6\}\}. 
\end{equation*}

The inverse map works as follows. Let $\sigma=\{P_1,\dots,P_k\}$ be a totally nested partition of $[n]$. If $k=1$, then $\varphi^{-1}(\sigma) = \sigma$. Otherwise, if $B$ is the union of the odd indexed blocks of $\sigma$ and $S$ is the union of the even indexed blocks of $\sigma$, then $\varphi^{-1}(\sigma)=\{B,\{i_1\},\{i_2\},\dots,\{i_s\}\}$, where $i_1,\dots,i_s$ are the elements of the set $S$. 
\end{proof}

For $n=4$, the partition $14\vert 2\vert 3$ is the only element of $\P_{\rm ncn}^{i}(4)$ that is not in $\P_{\rm tn}(4)$. Similarly, the elements of $\P_{\rm ncn}^{i}(5)$ that are not totally nested are $145\vert 2\vert 3$, $125\vert 3\vert 4$, $15\vert 2\vert 3\vert 4$, and $135\vert 2\vert 4$. These partitions get mapped by $\varphi$ as follows:
\def\ff{0.55}
\begin{gather*}
  \drawarc{\ff}{4}{1/4}\quad \tikz[baseline=-5]{\draw[->] (0,0)--(0.8,0); \node[above=1pt] at (0.4,0) {\small $\varphi$}}
  \quad \drawarc{\ff}{4}{1/4,2/3} \\[5pt]
  \drawarc{\ff}{5}{1/4,4/5}\quad \tikz[baseline=-5]{\draw[->] (0,0)--(0.8,0);}\quad \drawarc{\ff}{5}{1/4,4/5, 2/3} \\
  \drawarc{\ff}{5}{1/2,2/5}\quad \tikz[baseline=-5]{\draw[->] (0,0)--(0.8,0);}\quad \drawarc{\ff}{5}{1/2,2/5,3/4} \\
  \drawarc{\ff}{5}{1/5}\quad \tikz[baseline=-5]{\draw[->] (0,0)--(0.8,0);}\quad \drawarc{\ff}{5}{1/5,2/3,3/4} \\
  \drawarc{\ff}{5}{1/3,3/5}\quad \tikz[baseline=-5]{\draw[->] (0,0)--(0.8,0);}\quad \drawarc{\ff}{5}{1/5,2/4}
\end{gather*}

\begin{corollary}
Let $\Q_n$ be the set of partitions of $[n]$ whose indecomposable components are totally nested. This set is equinumerous with $\P_{\rm ncn}(n)$, so $|\Q_n|=\NCN(n)=F_{2n-1}$. Moreover, the elements of $\Q_n$ may be represented as tilings of an $n$-board corresponding to compositions of $n$ where part $k$ is decorated by an element of $\P_{\rm tn}(k)$. For example, $\Q_3$ consists of the 5 tilings

\[
\arcTile{3}{}{}{1/2,2/3}{0.6}\quad
\arcTile{3}{}{}{1/3}{0.6}\quad
\arcTile{3}{1}{}{2/3}{0.6}\quad
\arcTile{3}{2}{}{1/2}{0.6}\quad
\arcTile{3}{1,2}{}{}{0.6}
\]
\end{corollary}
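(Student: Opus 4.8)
The plan is to reduce the count $|\Q_n|$ to an invert-transform statement and then invoke the preceding Theorem. The crucial structural fact, recalled in the Remark, is that every partition of $[n]$ decomposes uniquely into indecomposable components, each occupying a consecutive block of integers; reading these components from left to right identifies the set of all partitions of $[n]$ with the set of colored compositions of $n$ in which a part of size $k$ is ``colored'' by an indecomposable partition of $[k]$. I would begin by recording this identification explicitly, since it is the engine behind every step that follows.

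Next, since a totally nested partition is indecomposable (as observed right after the definition of $\P_{\rm tn}$), a partition lies in $\Q_n$ exactly when each of its indecomposable components is totally nested. Under the identification above, this says precisely that $\Q_n$ is in bijection with the set of compositions of $n$ where part $k$ comes in $|\P_{\rm tn}(k)|$ colors. By the invert transform relation recalled in the Introduction (a $w$-color composition count is the invert transform of the coloring sequence), the sequence $\{|\Q_n|\}$ is therefore the invert transform of $\{|\P_{\rm tn}(k)|\}_k$.

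To finish the enumeration, I would apply the preceding Theorem, which gives $|\P_{\rm tn}(k)| = |\P_{\rm ncn}^{i}(k)| = \NCNi(k)$ for every $k$. Since the invert transform depends only on the underlying sequence, $\{|\Q_n|\}$ and $\{\NCN(n)\}$ are the invert transforms of one and the same sequence $\{\NCNi(k)\}$, and hence coincide. Combined with the Remark's computation of the generating function $\tfrac{x-x^2}{1-3x+x^2}$, this yields $|\Q_n| = \NCN(n) = F_{2n-1}$, which is the first assertion.

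Finally, the tiling representation is immediate from the same left-to-right decomposition: a composition $(k_1,\dots,k_m)$ of $n$ is drawn as a tiling of the $n$-board by tiles of lengths $k_1,\dots,k_m$, and the totally nested partition chosen for each part $k_i$ decorates the corresponding tile, its arc diagram sitting inside that tile. I would close by checking the displayed $n=3$ example: the five tilings arise from the compositions $(3)$ (decorated in the two ways counted by $\P_{\rm tn}(3)$), $(2,1)$, $(1,2)$, and $(1,1,1)$, for a total of $2+1+1+1 = 5 = F_5$. I expect the only genuine point of care to be the hinge between the two descriptions of $\Q_n$: one must use that a totally nested partition is itself indecomposable, so that ``all indecomposable components totally nested'' is equivalent to ``the partition is a concatenation of totally nested pieces'' with no ambiguity about how the pieces are cut. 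Everything else is bookkeeping with the invert transform together with an appeal to the previous Theorem.
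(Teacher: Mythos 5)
Your proposal is correct and follows exactly the route the paper intends for this corollary: identify $\Q_n$ with compositions of $n$ colored by $\P_{\rm tn}(k)$ via the decomposition into indecomposable components, apply the preceding theorem's equinumerosity $|\P_{\rm tn}(k)|=\NCNi(k)$, and invoke the invert-transform computation from the Remark to get $|\Q_n|=\NCN(n)=F_{2n-1}$. The verification of the $n=3$ example, including the count $2+1+1+1=5$, also matches the paper's display.
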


\begin{theorem}\label{thm:oddFiboUnimodal}
The set $\C_{F_{2k-1}}(n)$ of compositions of $n$, where part $k$ comes in $F_{2k-1}$ colors, is in one-to-one correspondence with the set of unimodal sequences of length $n$ covering an initial interval of positive integers. These sets are enumerated by the sequence 1, 3, 10, 34, 116, 396, 1352, 4616, \dots, see \oeis{A007052}.
\end{theorem}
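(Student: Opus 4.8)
The plan is to reuse the secondary-tiling philosophy of the previous sections, now with the arc decorations supplied by the Corollary. Since $\big|\Q_k\big| = F_{2k-1}$, I would represent the color of a part of size $k$ by an element of $\Q_k$, that is, by a partition of $[k]$ whose indecomposable components are totally nested, drawn as the secondary arc-tiling of that part. Thus every element of $\C_{F_{2k-1}}(n)$ becomes an $n$-board carrying solid separators at the part boundaries together with, inside each part, the nested-arc diagram of a $\Q_k$-coloring (itself split into totally nested pieces by its internal separators). Reading this board from left to right produces a linearly ordered list of totally nested components $C_1,\dots,C_r$ together with a marking recording which inter-component gaps are genuine part boundaries.

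On the target side, I would first replace a unimodal sequence by a more rigid skeleton. Given a unimodal sequence $a=(a_1,\dots,a_n)$ covering $[1,j]$, its level sets $I_\ell=\{\,i : a_i\ge \ell\,\}$ are, by unimodality, subintervals of $[1,n]$, and they form a strictly decreasing chain
\[ [1,n]=I_1\supsetneq I_2\supsetneq\cdots\supsetneq I_j, \]
the strictness of each inclusion being exactly the statement that $a$ covers the initial interval $[1,j]$. Conversely $a_i=\#\{\ell : i\in I_\ell\}$ recovers the sequence, so unimodal sequences of length $n$ covering an initial interval are in bijection with such strictly nested chains of nonempty subintervals anchored at $[1,n]$. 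It therefore suffices to set up a bijection between the decorated boards above and these interval chains.

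The bijection I would construct turns the linear list of components into a single nested chain by a folding procedure modeled on the map $\varphi$. The totally nested arcs inside each component give, by their block spans, a local pyramid (deepest under the innermost arc); the part and component markings then prescribe, level by level, whether the next shrinking of the window is taken from the left or from the right, so that the components are interleaved into one globally nested family of intervals $I_1\supsetneq\cdots\supsetneq I_j$. Concretely, the ascending half and the descending half of the resulting unimodal sequence should correspond to the two sides of the fold, the peak value to the number of blocks of the underlying partition, and the peeling amounts to the block sizes. The inverse map reads the chain from the inside out, unfolding it into an ordered list of totally nested components and reinstating the separators.

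The main obstacle is precisely this interleaving step: a colored composition is intrinsically a left-to-right concatenation of several pieces that are only \emph{locally} nested, whereas a unimodal sequence is one \emph{globally} nested profile, so the heart of the proof is to show that the folding governed by the part boundaries assembles the local pyramids into a genuine strictly decreasing interval chain. I expect the delicate points to be (i) well-definedness, namely that every value in $[1,j]$ is actually attained (coverage) and that the profile never begins a second ascent (unimodality); and (ii) invertibility, namely that the fold can be undone canonically, which is where I would lean on the totally nested normal form guaranteed by the Corollary and on the bijection $\varphi$. I would check the construction against the case $n=3$, where the ten colored compositions—the five colorings of $(3)$ together with the colorings of $(2,1)$, $(1,2)$, and $(1,1,1)$—must match the ten unimodal sequences, grouped $1+5+4$ by peak value.
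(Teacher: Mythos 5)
Your construction is essentially the paper's own proof: the paper likewise decorates each part with an element of $\Q_k$ (totally nested components joined by dotted/solid separators), maps each totally nested component $\{P_1,\dots,P_k\}$ to a pyramid-shaped sequence via $a_i=j$ iff $i\in P_j$, and then assembles the components with operations $\oplus_l$ and $\oplus_r$ that insert the next (shifted) pyramid to the left or to the right of the current maximum according to whether the separator is dotted or solid --- exactly your ``shrink the window from the left or from the right'' rule, read in level-set language. Your interval-chain reformulation of unimodal sequences and the folding procedure are a faithful repackaging of those insertion operations, so the plan is sound and coincides with the paper's argument.
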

\begin{proof}
Since $F_{2k-1}=|\Q_k|$, the elements of $\C_{F_{2k-1}}(n)$ may be represented as tilings of an $n$-board whose building blocks are tiles, decorated with totally nested partitions, that are connected by either a dotted or a solid vertical line. For example,
\[ n=1:\;\; \parbox{4.5ex}{\arcTile{1}{}{}{}{0.6}}, \qquad n=2:\;\; \parbox{28ex}{$\arcTile{2}{}{}{1/2}{0.6}\quad \arcTile{2}{}{1}{}{0.6}\quad \arcTile{2}{1}{}{}{0.6}$}, \]
and for $n=3$, we have 10 tilings:
\smallskip
\begin{equation} 
\label{eq:oddFiboColor}
\begin{gathered}
\arcTile{3}{}{}{1/2,2/3}{0.6}\quad
\arcTile{3}{}{}{1/3}{0.6}\quad
\arcTile{3}{}{1}{2/3}{0.6}\quad
\arcTile{3}{}{2}{1/2}{0.6}\quad
\arcTile{3}{}{1,2}{}{0.6} \\
\arcTile{3}{1}{}{2/3}{0.6}\quad
\arcTile{3}{1}{2}{}{0.6}\quad
\arcTile{3}{2}{}{1/2}{0.6}\quad
\arcTile{3}{2}{1}{}{0.6}\quad
\arcTile{3}{1,2}{}{}{0.6}
\end{gathered}
\end{equation}

On the other hand, if $\pi=\{P_1,\dots,P_k\}\in \P_{\rm tn}(n)$, there is a unique sequence $(a_1,a_2,\dots,a_n)$ defined by 
\[ a_i = j \;\text{ if }\; i\in P_j. \] 
Every such sequence is unimodal and satisfies:
\begin{itemize}
\item $a_1=a_n=1$,
\item $|a_{i+1}-a_i|\le 1$ for every $i\in\{1,\dots,n-1\}$.
\end{itemize}
This gives a natural bijective map $\psi:  \P_{\rm tn}(n)\to \U_{\rm tn}(n)$, where $\U_{\rm tn}(n)$ denotes the set of unimodal sequences of length $n$ satisfying the above two properties. For example, the totally nested partition

\begin{center}
\drawarc{0.7}{9}{1/8,8/9,2/3,3/7,4/6} \\
\tikz{
\foreach \i in {1,...,9}{
 \node at (0.7*\i,0) {\small \i};
}}
\end{center}
corresponds to the unimodal sequence $(1,2,2,3,4,3,2,1,1)$. 

For $u\in \U_{\rm tn}(k)$, let $i^*\le j^*$ be the indices of the first and last elements of $u$ such that $u_{i^*}=u_{j^*}=\max(u)$. For $v\in \U_{\rm tn}(\ell)$, we define two connecting operations:
\begin{enumerate}[(a)]
\item $u \oplus_l v=$ sequence of length $k+\ell$ obtained by inserting $v+\max(u)$ to the left of $u_{i^*}$,
\item $u \oplus_r v=$ sequence of length $k+\ell$ obtained by inserting $v+\max(u)$ to the right of $u_{j^*}$,
\end{enumerate}
where $v+\max(u)=(v_1+\max(u),\dots,v_\ell+\max(u))$. For example,
\begin{gather*}
 (1,1) \oplus_l (1,2,1) = (2,3,2,1,1), \qquad (1,1) \oplus_r (1,2,1) = (1,1,2,3,2), \\
 (1,2,1) \oplus_l (1,1) = (1,3,3,2,1), \qquad (1,2,1) \oplus_r (1,1) = (1,2,3,3,1).
\end{gather*}

We let $\U(n)$ be the set $\U_{\rm tn}(n)$ together with all the sequences of length $n$ that can be generated with the connecting associative operations $\oplus_l$ and $\oplus_r$. By definition, the elements of $\U(n)$ are unimodal sequences covering an initial interval of positive integers. Moreover, it is easy to verify that $\U(n)$ contains all such sequences. For example, $\U(3)$ consists of the 10 sequences:
\begin{center}
\def\decom#1{\scriptsize\color{blue}$#1$}
\begin{tabular}{ccccc}
(1,1,1) & (1,2,1) & (2,2,1) & (2,1,1) & (3,2,1) \\
\decom{(1,1,1)} & \decom{(1,2,1)} & \decom{(1)\oplus_l (1,1)} & \decom{(1,1)\oplus_l (1)} & \decom{(1)\oplus_l (1)\oplus_l (1)} \\[5pt]
(1,2,2) & (1,3,2) & (1,1,2) & (2,3,1) & (1,2,3) \\
\decom{(1)\oplus_r (1,1)} & \decom{(1)\oplus_r (1)\oplus_l (1)} & \decom{(1,1)\oplus_r (1)} & \decom{(1)\oplus_l (1)\oplus_r (1)} & \decom{(1)\oplus_r (1)\oplus_r (1)}
\end{tabular}
\end{center}
which are placed in order to match their corresponding compositions in \eqref{eq:oddFiboColor}.

Finally, the correspondences
\begin{gather*}
\tikz{\node[left=1pt] at (0,0){$\pi$}; \draw[dottedSep] (0,-0.2) -- (0,0.25); \node[right=1pt] at (0,0){$\sigma$};
 \draw[<->] (0.6,0)--(1.2,0); \node at (2.6,0) {$\psi(\pi) \oplus_l \psi(\sigma)$,}} \quad
\tikz{\node[left=1pt] at (0,0){$\pi$}; \draw[very thick] (0,-0.2) -- (0,0.25); \node[right=1pt] at (0,0){$\sigma$};
 \draw[<->] (0.6,0)--(1.2,0); \node at (2.6,0) {$\psi(\pi) \oplus_r \psi(\sigma)$,}}
\end{gather*}
for totally nested partitions $\pi$ and $\sigma$, give a bijection between $\C_{F_{2k-1}}(n)$ and $\U(n)$.
\end{proof}

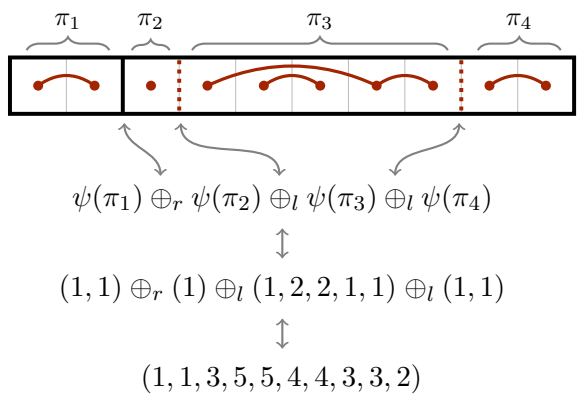
\begin{figure}[ht]
\begin{tikzpicture}
\draw [decorate,decoration={brace,amplitude=5pt,mirror,raise=2pt},thick,gray]
 (-2.3,0.4) -- (-3.4,0.4) node [midway,yshift=13pt,black] {\small $\pi_1$};
\draw [decorate,decoration={brace,amplitude=4pt,mirror,raise=2pt},thick,gray]
 (-1.5,0.4) -- (-2,0.4) node [midway,yshift=13pt,black] {\small $\pi_2$};
\draw [decorate,decoration={brace,amplitude=5pt,mirror,raise=2pt},thick,gray]
 (2.2,0.4) -- (-1.2,0.4) node [midway,yshift=13pt,black] {\small $\pi_3$};
\draw [decorate,decoration={brace,amplitude=5pt,mirror,raise=2pt},thick,gray]
 (3.7,0.4) -- (2.6,0.4) node [midway,yshift=13pt,black] {\small $\pi_4$};
\node at (0,0) {\arcTile{10}{2}{3,8}{1/2,4/7,5/6,7/8,9/10}{0.75}};
\draw[<->,gray,thick] (-2.1,-0.5) to[out=-90,in=90] (-1.62,-1.2);
\draw[<->,gray,thick] (-1.35,-0.5) to[out=-90,in=90] (-0.04,-1.2);
\draw[<->,gray,thick] (2.35,-0.5) to[out=270,in=90] (1.5,-1.2);
\node at (0,-1.5) {$\psi(\pi_1)\oplus_r \psi(\pi_2) \oplus_l \psi(\pi_3) \oplus_l \psi(\pi_4)$};
\draw[thick,<->,gray] (0,-1.9) -- (0,-2.3);
\node at (0,-2.7) {$(1,1)\oplus_r (1) \oplus_l (1,2,2,1,1) \oplus_l (1,1)$};
\draw[thick,<->,gray] (0,-3.1) -- (0,-3.5);
\node at (0,-3.9) {$(1,1,3,5,5,4,4,3,3,2)$};
\end{tikzpicture}
\caption{Example for the bijection given in Theorem~\ref{thm:oddFiboUnimodal}.}
\label{fig:oddFiboBijection}
\end{figure}

\bigskip
We finish this section with a connection between $\C_{F_{2k-1}}(n)$ and the set of order-consecutive partition sequences studied by Hwang and Mallows \cite{HM95}.

An ordered partition sequence $(S_1,\dots,S_p)$ of $[n]$ is said to be {\em order-consecutive} if and only if for every $k=1,\dots, p$, we have that $\bigcup_{i=1}^k S_i$ is a consecutive subset of $[n]$. For example,
\begin{equation} \label{eq:sampleOCPS}
 S_1=\{5,6\},\; S_2=\{7\},\; S_3=\{3,4\},\; S_4=\{2,8,9\},\; S_5=\{1\} 
\end{equation}
is an order-consecutive partition sequence of $\{1,\dots,9\}$ with 5 parts.

Hwang and Mallows \cite{HM95} cleverly represented order-consecutive partition sequences of $[n]$ with $p$ parts as strings created by inserting $p-1$ commas and $p-1$ slashes (in alternating fashion, from left to right, starting with a comma) into the $n$ spaces between the elements of $[n]$ (including the space after the last element). These strings describe how the partition is to be constructed: the commas divide $[n]$ into $p$ blocks that form a consecutive partition sequence of $[n]$, and the slashes indicate how the number of elements are to be placed at each step. In this representation $[n]$ is split by the commas such that the $i$th block has the same cardinality of $S_i$ for $i = 1,\dots, p$, and all the commas and slashes are placed, until we have $p-1$ commas and $p-1$ slashes, following these rules for $j = 1,\dots, p-1$:
\begin{itemize}
\item between the $j$th comma and the $j$th slash we have a number of elements corresponding to the number of elements of $S_{j+1}$ that lie to the left of those of $\bigcup_{i=1}^{j} S_i$;
\item between the $j$th slash and the $(j+1)$th comma we have a number of elements corresponding to the number of elements of $S_{j+1}$ that lie to the right of those of $\bigcup_{i=1}^{j} S_i$.
\end{itemize}
For example, the sequence in \eqref{eq:sampleOCPS} can be represented as
\[ 12,/3,45/,6/78,9/ \]

Let us point out that the subset of such representations that do not end with a slash, and for which commas and slashes do not appear next to each other, is in one-to-one correspondence with the set of totally nested partitions. Indeed, given a partition $\pi=\{B_1,\dots,B_p\}\in\P_{\rm tn}(n)$, the sequence of blocks listed in reversed order
\[ B_p,B_{p-1},\dots,B_1, \]
forms an order-consecutive partition sequence of $[n]$, and for every $j=1,\dots,p-1$, the block $B_j$ can be split into two subsets $B'_j\cup B''_j$ such that $B'_j<B_{j+1}<B''_j$. Therefore, the corresponding comma-slash representation of this sequence has neither a slash at the end nor a slash next to a comma. We let $\xi(\pi)$ be the comma-slash representation of the sequence $(B_p,B_{p-1},\dots,B_1)$.

For example, given the totally nested partition 
\begin{equation*}
\pi = \{\{1,8,9\},\{2,3,7\},\{4,6\},\{5\}\}, 
\end{equation*}
we get the order-consecutive partition sequence
\begin{equation*}
 \{5\}, \{4,6\}, \{2,3,7\}, \{1,8,9\},
\end{equation*}
which implies $\xi(\pi)=1,2/3,45/6,7/89$. The map $\xi$ is clearly invertible.

\begin{theorem}
The set $\C_{F_{2k-1}}(n)$ of compositions of $n$, where part $k$ comes in $F_{2k-1}$ colors, is in one-to-one correspondence with the set of order-consecutive partition sequences of $[n]$.
\end{theorem}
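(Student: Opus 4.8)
The plan is to deduce the statement from Theorem~\ref{thm:oddFiboUnimodal}, which already puts $\C_{F_{2k-1}}(n)$ in bijection with the set $\U(n)$ of unimodal sequences of length $n$ that cover an initial interval of positive integers. It then suffices to biject $\U(n)$ with the order-consecutive partition sequences of $[n]$ and to check that the resulting map is the natural extension of $\xi$, so that the argument stays inside the framework developed above.

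I would define $\Phi\colon\U(n)\to\{\text{order-consecutive partition sequences of }[n]\}$ by reading off superlevel sets. For $u=(u_1,\dots,u_n)\in\U(n)$ with $p=\max(u)$, put $S_j=\{\,i : u_i=p+1-j\,\}$ for $j=1,\dots,p$ and set $\Phi(u)=(S_1,\dots,S_p)$. Because $u$ covers $\{1,\dots,p\}$, each $S_j$ is nonempty and the $S_j$ partition $[n]$. The crucial identity is
\[ \bigcup_{t=1}^{j} S_t=\{\,i : u_i\ge p+1-j\,\}, \]
and every superlevel set of a unimodal sequence is an interval; hence each prefix union is consecutive and $\Phi(u)$ is order-consecutive. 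Conversely, from an order-consecutive partition sequence $(S_1,\dots,S_p)$ I would recover $u$ by $u_i=p+1-j$ for $i\in S_j$: its superlevel sets are exactly the prefix unions $\bigcup_{t\le j}S_t$, all intervals, and a sequence whose superlevel sets are all intervals is unimodal; since every level $1,\dots,p$ is attained, $u\in\U(n)$. Thus $\Phi$ is a bijection.

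To tie this back to $\xi$, I would note that for a totally nested partition $\pi=\{B_1,\dots,B_p\}\in\P_{\rm tn}(n)$ the unimodal sequence $\psi(\pi)$ takes the value $j$ precisely on $B_j$, so $\Phi(\psi(\pi))$ lists the blocks in decreasing index order as $(B_p,\dots,B_1)$, which is exactly the order-consecutive partition sequence encoded by $\xi(\pi)$. Hence $\Phi\circ\psi=\xi$ on $\P_{\rm tn}(n)$, and since the bijection of Theorem~\ref{thm:oddFiboUnimodal} is assembled from the $\psi$-images of the totally nested tiles via $\oplus_l$ (dotted separators) and $\oplus_r$ (solid separators), the composite $\C_{F_{2k-1}}(n)\to\{\text{order-consecutive partition sequences}\}$ extends $\xi$ tile by tile, with each separator recorded through its $\oplus_l/\oplus_r$ step.

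The main obstacle I anticipate is purely the verification that ``$u$ is unimodal $\iff$ every superlevel set of $u$ is an interval'' and that the covering condition matches the nonemptiness of the blocks, so that $\Phi$ lands on the order-consecutive partition sequences and on no others; once this equivalence is in hand, both bijectivity and the identity $\Phi\circ\psi=\xi$ are immediate. A direct extension of the comma--slash map $\xi$ also works---inserting one comma and one slash at each tile junction, placing the slash immediately for one separator type and deferring it for the other---but proving that the global comma/slash alternation survives for tiles of arbitrary internal structure is precisely the bookkeeping that the $\U(n)$ detour sidesteps.
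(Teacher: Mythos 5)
Your proof is correct, and it takes a genuinely different route from the paper's. The paper never passes through unimodal sequences: it works directly with the Hwang--Mallows comma-slash strings, listing the totally nested components of the tiling, inserting a black comma-slash pair at each solid separator and a red one at each dotted separator, applying $\xi$ componentwise, and finally sliding each red slash rightward to the next comma (or to the end); the reversibility of that sliding step is what makes the paper's map a bijection. You instead compose the bijection $\C_{F_{2k-1}}(n)\to\U(n)$ of Theorem~\ref{thm:oddFiboUnimodal} with a new bijection $\Phi$ from $\U(n)$ to order-consecutive partition sequences given by reading level sets from the top down. Your argument is sound: the prefix unions of $\Phi(u)$ are exactly the superlevel sets of $u$; the lemma you isolate (unimodal $\iff$ all superlevel sets are intervals) is easily dispatched --- one direction is immediate, and for the other, a failure of unimodality produces a valley $i<j<k$ with $u_j<\min(u_i,u_k)$, so the superlevel set at height $\min(u_i,u_k)$ is disconnected --- and the covering condition corresponds precisely to nonemptiness of the blocks, so $\Phi$ lands onto, and only onto, the order-consecutive partition sequences. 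Your observation that $\Phi\circ\psi=\xi$ on $\P_{\rm tn}(n)$ (both list the blocks in reversed order) correctly ties your construction to the paper's formalism, although your composite bijection need not literally coincide with the paper's map; that is harmless, since the theorem asserts only the existence of a bijection. What your route buys is conceptual economy: all of the string bookkeeping (the comma/slash alternation, the slash-sliding rule and its inverse) is replaced by one transparent lemma, and injectivity and surjectivity of $\Phi$ are immediate. What the paper's route buys is an explicit algorithm whose output is the Hwang--Mallows comma-slash representation itself, which is the standard encoding of these objects and which keeps visible exactly how the two separator types of the tiling are recorded in the final string.
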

\begin{proof}
We give a bijection that relies on the map $\xi$ described above:
\begin{enumerate}[(i)]
\item Given an element of $\C_{F_{2k-1}}(n)$, represent it as a tiling of an $n$-board whose tiles are connected by dotted or solid vertical lines and are decorated with totally nested partitions. 
\item List the totally nested components of the tiling, placing a black comma-slash pair ``{\bf ,/}'' when tiles are joined by a solid line, and a red comma-slash pair ``\textcolor{tileColor}{\bf ,/}'' when tiles are joined by a dotted line.
\item For each totally nested component, apply the map $\xi$ and relabel its elements so that the resulting partition corresponds to a consecutive partition of $[n]$.
\item Slide any red slash to the right so that it creates a black ``/,'' pair with the next comma to its right. If a red slash has no comma to its right, then slide it all the way to the end.
\end{enumerate}

This algorithm gives an order-consecutive partition sequence of $[n]$, and it is reversible. 

For example, if $(2,7)$ is an element of $\C_{F_{2k-1}}(9)$, where part 2 is colored by \parbox{6ex}{\arcTile{2}{}{}{1/2}{0.45}} and part 7 is colored by \parbox{22ex}{\arcTile{7}{}{1,6}{2/5,3/4,5/6}{0.5}}, then the above algorithm gives the following steps:
\medskip
\begin{align*}
\text{(i) } &\; \parbox{40ex}{\arcTile{9}{2}{3,8}{1/2,4/7,5/6,7/8}{0.67}} \\[5pt]
\text{(ii) } &\; \{1,2\} \boldsymbol{,/} \{1\} \textcolor{tileColor}{\boldsymbol{,/}} \{1,4,5\}, \{2,3\} \textcolor{tileColor}{\boldsymbol{,/}} \{1\}\\[3pt]
\text{(iii) } &\; 12, /3\textcolor{tileColor}{,/} 45, 6/78\textcolor{tileColor}{,/}9 \\[3pt]
\text{(iv) } &\; 12, /3, 45/, 6/78, 9/
\end{align*}
The resulting string is the representation of the sequence $\{5,6\},\{7\},\{3,4\},\{2,8,9\},\{1\}$.
\end{proof}

\section{Further remarks}
\label{sec:furtherRemarks}

There is a connection between the colored compositions discussed in Section~\ref{sec:FiboColoring} and certain restricted 2-compositions recently considered by Hopkins and Ouvry \cite{HO20}.

A $k$-composition of $n$ is a standard composition of $n$ where each part is assigned a color $1,\dots,k$, and the first part must have color 1. Among the results provided in \cite{HO20}, the authors give generating functions for the enumeration of $k$-compositions whose parts are restricted to only parts $1$ and $2$ (denoted by $C^{k}_{12}(n)$), only odd parts (denoted by $C^{k}_{\rm odd}(n)$), and only parts greater than $1$ (denoted by $C^{k}_{\hat{1}}(n)$). For $k=2$, the cardinality of these sets match those of our colored compositions in Theorems~\ref{thm:noAdjacent0s}, \ref{thm:Pell}, and \ref{thm:Jacobsthal}, respectively.

For instance, when restricted to parts $1$ and $2$, the 2-compositions have only four possible parts, $1_{1}, 1_{2}, 2_{1}, $ and $ 2_{2}$. There is a simple bijection between these compositions and the colored compositions in $\C_{F_{k+1}}(n)$. Representing colored composition as tiled boards (see Section~\ref{sec:FiboColoring}), we assign the color $1$ in the composition to a solid separator and the color $2$ to a dotted separator. Then, since a 2-composition must start with color $1$, we begin from left to right on the outside of the board assigning colored numbers to each separator based on the size of the tile (including the secondary tiles) and the type of separator. We exclude the right end of the board. 

For example, the 8 elements of $C^{2}_{12}(3)$ are obtained from $\C_{F_{k+1}}(3)$ as follows: 

\begin{center}
\begin{tikzpicture}[scale=0.55]
\node at (-5,0.3) {\mixedTile{3}{1,2}{}{0.7}};
\node at (-6.7,1.35) {\scriptsize $1_{1}$};
\node at (-5.4,1.35) {\scriptsize $1_{1}$};
\node at (-4.1,1.35) {\scriptsize $1_{1}$};
\node at (0,0.3) {\mixedTile{3}{1}{}{0.7}};
\node at (-1.7,1.35) {\scriptsize $1_{1}$};
\node at (-0.4,1.35) {\scriptsize $2_{1}$};
\node at (5,0.3) {\mixedTile{3}{1}{2}{0.7}};
\node at (3.3,1.35) {\scriptsize $1_{1}$};
\node at (4.6,1.35) {\scriptsize $1_{1}$};
\node at (5.9,1.35) {\scriptsize $1_{2}$};
\node at (10,0.3) {\mixedTile{3}{2}{}{0.7}};
\node at (8.3,1.35) {\scriptsize $2_{1}$};
\node at (10.9,1.35) {\scriptsize $1_{1}$};
\end{tikzpicture}

\begin{tikzpicture}[scale=0.55]
\node at (-5,0.3) {\mixedTile{3}{2}{1}{0.7}};
\node at (-6.7,1.35) {\scriptsize $1_{1}$};
\node at (-5.4,1.35) {\scriptsize $1_{2}$};
\node at (-4.1,1.35) {\scriptsize $1_{1}$};
\node at (0,0.3) {\mixedTile{3}{}{1}{0.7}};
\node at (-1.7,1.35) {\scriptsize $1_{1}$};
\node at (-0.4,1.35) {\scriptsize $2_{2}$};
\node at (5,0.3) {\mixedTile{3}{}{2}{0.7}};
\node at (3.3,1.35) {\scriptsize $2_{1}$};
\node at (5.9,1.35) {\scriptsize $1_{2}$};
\node at (10,0.3) {\mixedTile{3}{}{1,2}{0.7}};
\node at (8.3,1.35) {\scriptsize $1_{1}$};
\node at (9.6,1.35) {\scriptsize $1_{2}$};
\node at (10.9,1.35) {\scriptsize $1_{2}$};
\end{tikzpicture}
\end{center}

Clearly, this map also gives bijections $\C_{F_{k}}(n)\to C^{2}_{\rm odd}(n)$ and  $\C_{F_{k-1}}(n)\to C^{2}_{\hat{1}}(n)$.

More generally, using the above map, the elements of $C^k(n)$ (set of $k$-compositions of $n$) could be represented as tilings of an $n$-board whose tiles are put together using $k$ different types of separators. It would be interesting to use this refined tiling representation to investigate $k$-compositions with other part restrictions and colorings.



\begin{thebibliography}{99}
\bibitem{ABCM14}  M.~Abrate, S.~Barbero, U.~Cerruti, N.~Murru, Colored compositions, invert operator and elegant compositions with the ``black tie'', \emph{Discrete Math.} \textbf{335} (2014), 1--7.
%
\bibitem{BGW18} D.~Birmajer, J.~B.~Gil, and M.~D.~Weiner, Compositions colored by simplicial polytopic numbers, {\em J. Comb.} \textbf{9} (2018), 221--232.
%
\bibitem{GL13} I.~M.~Gessel and J.~Li, Compositions and Fibonacci identities, {\em J. Integer Seq.} \textbf{16} (2013), Article 13.4.5.
%
\bibitem{HM10} S.~Heubach and T.~Mansour, \emph{Combinatorics of Compositions and Words}, Discrete Mathematics and its Applications (Boca Raton), CRC Press, Boca Raton, FL, 2010.
%
\bibitem{HL68} V.~E.~Hoggatt, D.~A.~Lind, Fibonacci and binomial properties of weighted compositions, \emph{J. Combin. Theory} \textbf{4} (1968), 121--124.
%
\bibitem{BHop12}  B.~Hopkins, Spotted tilings and $n$-color compositions, {\em Integers} \textbf{12B} (2012/13), Proceedings of the Integers Conference 2011, Paper No. A6, 11 pp.
%
\bibitem{HO20} B.~Hopkins and S.~Ouvry, Combinatorics of multicompositions, preprint arXiv:2008.04937, Aug.~2020.
%
\bibitem{HM95} F.~K.~Hwang and C.~L.~Mallows, Enumerating nested and consecutive partitions, {\em J. Comb. Theory A} \textbf{70} (1995), 323--333.
%
\bibitem{Mar13} E.~Marberg, Crossings and nestings in colored set partitions, {\em Electron. J. Combin.} \textbf{20} (2013), no.~4, \#P6.
%
\bibitem{MW61} L.~Moser and E.~L.~Whitney, Weighted compositions, \emph{Canad. Math. Bull.} \textbf{4} (1961), 39--43.
%
\bibitem{Sloane} N. J. A.~Sloane, The On-Line Encyclopedia of Integer Sequences, {\tt http://oeis.org}.
\end{thebibliography}
\end{document}